\title{On the Spectrum of Random Anti-symmetric and Tournament Matrices}
\newtheorem{theorem}{Theorem}
\newtheorem{proposition}{Proposition}[section]
\newtheorem{lemma}[proposition]{Lemma}
\author{ Philippe Sosoe\thanks{Center for Mathematical Sciences and Applications, Harvard University. \texttt{psosoe@cmsa.harvard.edu}}\,  and Uzy Smilansky\thanks{Department of Physics of Complex Systems, Weizmann Institute, Rehovot, Israel. \texttt{uzy.smilansky@weizmann.ac.il}}}
\begin{document}

\maketitle 

\begin{abstract}
We consider a discrete, non-Hermitian random matrix model, which can be expressed as a shift of a rank-one perturbation of an anti-symmetric matrix. We show that, asymptotically almost surely, the real parts of the eigenvalues of the non-Hermitian matrix around any fixed index are interlaced with those of the anti-symmetric matrix. Along the way, we show that some tools recently developed to study the eigenvalue distributions of Hermitian matrices extend to the anti-symmetric setting.
\end{abstract}

\section{Introduction}
The purpose of this note is to analyze the small-scale properties of the spectrum of large random tournament matrices. For any positive integer $N$, a \emph{tournament} of size $N$ is an $N\times N$ matrix $D=(D_{ij})_{1\le i,j\le N}$ with entries in $\{0,1\}$ such that 
\[D_{ii}=0,\] 
and 
\[D_{ij} = 1-D_{ji}, \text{ for } i\neq j.\]
The name comes from the following interpretation of the matrix entries: $D_{ij}$ represents the outcome of the match between player $i$ and player $j$ in a tournament where every possible pair of players meets once (a ``round-robin'' tournament). We choose the matrix $D$ uniformly at random from the $2^{N(N-1)/2}$ possible choices of tournament matrices. 

The spectrum of a tournament matrix $D$ is complex, but $D$ is related to a Hermitian matrix by a simple transformation. Subtracting from each off-diagonal element of $D$ its mean $1/2$, we obtain an anti-symmetric matrix with entries in $\{\pm \frac{1}{2}\}$. Multiplying by $i$, we find that $D$ can be written in terms of a (non-Hermitian) rank one perturbation of a Hermitian matrix $M$:
\begin{equation}\label{eq: Mdef}
M = 2iD - i\left(|\mathbf{1}\rangle \langle \mathbf{1}|-I\right).
\end{equation}
Here, $\langle \mathbf{1}|=(1,\ldots, 1)$ is the row vector whose entries are all $1$, and $|\mathbf{1}\rangle = \langle \mathbf{1}|^\intercal$. We have applied an overall scaling by $2$ for convenience, so that the entries of $M/i$ lie in $\{\pm 1\}$, with variance 1, and $\mathrm{tr}\, M^2 =-N(N-1)$. 

The matrix $M$ has the form
\begin{equation}\label{eqn: realanti}
i \times (\text{real anti-symmetric matrix}).
\end{equation}
If $N$ is odd, the spectrum of the Hermitian matrix $M$ consists of the value $0$ and $N-1$ real eigenvalues symmetrically distributed in pairs about $0$. When $N$ is even, the spectrum is also symmetric, but there isn't necessarily an eigenvalue at $0$. Wigner's semicircle law implies that the spectrum is asymptotically concentrated on $[-2\sqrt{N},2\sqrt{N}]$.  

We will call matrices such as $M$, whose real part is zero, \emph{anti-symmetric Hermitian} to distinguish them from Hermitian matrices in the universality class of the commonly studied Gaussian Unitary Ensemble (GUE). The Gaussian model corresponding to matrices of the form \eqref{eqn: realanti} differs from the GUE, although its asymptotic behavior is similar in many respects. (See Section \ref{sec: antigaussian}.)

We present two results concerning the random matrices $M$ and $D$, in the large $N$ limit. The first is that the correlation functions of the matrix $M$ have sine kernel behavior in the bulk. Our second and main result relates the spectrum of $D$ to that of $M$, and more generally deals with non-Hermitian perturbations of Hermitian Wigner matrices.

Denote by 
\[\lambda_{-(N-1)/2}(M) \le \ldots \le \lambda_0(M) = 0 \le \ldots \le \lambda_{(N-1)/2}(M)\] 
the ordered eigenvalues of the matrix $M$. The eigenvalue $\lambda_0(M)$ is absent when $N$ is even.
The first result is sine kernel universality in the bulk for the matrices $M$:
\begin{theorem}\label{thm1}
Let $W$ be chosen uniformly at random from the ensemble of anti-symmetric $N\times N$ matrices with $\pm 1$ entries, and define the Hermitian matrix $M = iW$. The rescaled empirical eigenvalue density
\[\rho_M = \frac{1}{N} \sum_{j=1}^N \delta_{\frac{\lambda_j(M)}{\sqrt{N}}}\]
is symmetric about $0$. It converges almost surely in distribution to the semicircle distribution \eqref{eq: scdist}.

For any $E$ in $(0,2)$, and any $b>0$ such that $I_E=[E-b,E+b]\subset (0,2)$, the $n$-point correlation functions $p_n$ of $M$, properly rescaled and averaged over $I_E$, converge to those of the sine kernel process. That is, for any smooth $Q:\mathbf{R}^n\rightarrow \mathbf{R}$,
\begin{multline}
\iint_{I_E}Q(x_1,\ldots,x_n) p_{n}\left(E+\frac{x_1}{N\rho_{sc}(E)},\ldots, E +\frac{x_n}{N\rho_{sc}(E)}\right)\,\mathrm{d}x_1\cdots\mathrm{d}x_n \frac{\mathrm{d}E}{(\rho_{sc}(E))^n|I_E|}\\\rightarrow \iint Q(x_1,\ldots, x_n) \operatorname{det}(K(x_i-x_j))_{i,j=1}^n \,\mathrm{d}x_1\ldots\mathrm{d}x_n\mathrm{d}E,
\end{multline}
with
\[K(x-y) = \frac{\sin\pi(x-y)}{\pi(x-y)}.\]
\end{theorem}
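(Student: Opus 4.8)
The plan is to run the standard three-step strategy for bulk universality of Wigner-type matrices, adapted to the anti-symmetric symmetry class: (i) a local semicircle law and eigenvalue rigidity for $M$; (ii) sine-kernel universality for a Gaussian-divisible comparison ensemble in the same class; (iii) removal of the Gaussian component by a moment comparison. For step (i) I would establish the local law for the resolvent $G(z)=(M/\sqrt N-z)^{-1}$: for $z=E+i\eta$ with $E$ in a fixed compact subset of $(-2,2)$ and $N^{-1+\epsilon}\le\eta\le C$, one should obtain $|m_N(z)-m_{sc}(z)|\prec(N\eta)^{-1}$ together with the entrywise bound $|G_{ij}(z)-\delta_{ij}m_{sc}(z)|\prec\sqrt{\operatorname{Im}m_{sc}(z)/(N\eta)}+(N\eta)^{-1}$, where $m_{sc}$ is the Stieltjes transform of \eqref{eq: scdist}. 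This is the usual Schur-complement and self-consistent-equation argument, whose inputs are large-deviation bounds for the linear and quadratic forms in the entries of $M$ produced by expanding $G_{ii}$; these hold because the entries are bounded. The one structural point to check is that $M$ is not a complex Hermitian Wigner matrix in the usual sense --- its off-diagonal entries are purely imaginary and satisfy $M_{ij}=-M_{ji}$, so $M_{ij}$ and $M_{ji}$ are perfectly correlated rather than independent --- but this is harmless: the minors $M^{(i)}$ are again anti-symmetric Hermitian, the self-consistent equation $m_{sc}(z)^{-1}=-z-m_{sc}(z)$ is unchanged, and the fluctuation-averaging mechanism goes through verbatim. This is the step in which the tools mentioned in the abstract are transferred to the anti-symmetric setting. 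Rigidity of $\lambda_j(M)/\sqrt N$ around the classical locations then follows, and the almost-sure convergence of $\rho_M$ to \eqref{eq: scdist} follows from the local law together with the Lipschitz concentration of linear eigenvalue statistics in the matrix entries and Borel--Cantelli.

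For step (ii), let $H^G=iV$, where $V$ is a real anti-symmetric matrix with centered Gaussian entries of variance $N^{-1}$ above the diagonal, independent of $M$, and set $M^{(t)}/\sqrt N=\sqrt{1-t}\,M/\sqrt N+\sqrt t\,H^G$. I would show that for $t=t_N=N^{-1+\epsilon}$ the correlation functions of $M^{(t)}$, rescaled and averaged over $I_E$, converge to the sine kernel. After a time reparametrization, $M^{(t)}$ is the Ornstein--Uhlenbeck flow on anti-symmetric Hermitian matrices started from $M$ and run up to a time $\asymp t_N$, so its eigenvalue process solves the associated Dyson Brownian motion; anti-symmetry forces the eigenvalues into $\pm$ pairs, and the positive eigenvalues $\{y_k\}$ satisfy a $\beta=2$ Dyson Brownian motion on the half-line, with a drift coming from the mirror eigenvalues $\{-y_k\}$ (and, when $N$ is odd, from the eigenvalue at $0$) that is regular for energies in a fixed compact subset of $(0,2)$. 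Feeding the rigidity from step (i) into the local-relaxation (homogenization) estimates for Dyson Brownian motion shows that after time $t_N$ the local statistics in such a region have relaxed to those of the equilibrium measure, which is the sine kernel. (Alternatively one could start from the explicit determinantal kernel of the anti-symmetric Gaussian ensemble deformed by a deterministic matrix and perform a steepest-descent analysis in the spirit of Johansson; this route also needs $t\gtrsim N^{-1+\epsilon}$.)

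For step (iii), I would compare $M$ with $M^{(t_N)}$ by a four-moment argument. Writing the off-diagonal entries of $M/i$ as independent Rademacher variables $W_{ij}$ and those of $M^{(t)}/i$ as $W^{(t)}_{ij}=\sqrt{1-t}\,W_{ij}+\sqrt t\,g_{ij}$ with $g_{ij}$ independent standard Gaussians, one checks that the moments of $W_{ij}$ and $W^{(t)}_{ij}$ agree through order three (the odd ones vanishing by symmetry, and $\mathbb E[(W^{(t)}_{ij})^2]=1$ as for $W_{ij}$, hence $\mathbb E[(M^{(t)}_{ij})^2]=-1=\mathbb E[M_{ij}^2]$), while the fourth moments differ by $(1-t)^2+6t(1-t)+3t^2-1=4t-2t^2=O(t_N)$. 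Since these real moments determine all the joint moments of the purely imaginary entries $M_{ij}$ and $M^{(t)}_{ij}$, the four-moment comparison theorem --- which, for correlation functions averaged over an energy window, requires only that the moments up to order three match and that the fourth moments be close, and whose missing input is a local law for $M$, for $M^{(t_N)}$, and for the interpolating matrices, all of which are anti-symmetric Hermitian and covered by the argument of step (i) --- yields that the correlation functions of $M$ and $M^{(t_N)}$, rescaled and averaged over $I_E$, have the same limit. Combined with step (ii), this proves the theorem.

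The main obstacle is exactly this last point. The $\pm 1$ law has the minimal possible fourth moment, so it cannot be realized as a Gaussian convolution, and no Gaussian-divisible ensemble can be made to match $M$ to fourth order; one is therefore pushed into the weaker, energy-averaged version of the comparison, which is the cleanest reason Theorem \ref{thm1} is stated for correlation functions averaged over the window $I_E$ rather than at a fixed energy. By contrast, verifying that the self-consistent resolvent analysis and the Dyson-Brownian-motion relaxation estimates survive the anti-symmetry constraint and the $\pm$ pairing of eigenvalues is routine, because everything that matters happens locally, in a fixed compact subset of $(0,2)$ away from the atom at $0$.
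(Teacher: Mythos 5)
Your proposal is correct and follows essentially the same three-step route as the paper: a local semicircle law for the anti-symmetric Hermitian class, relaxation to the anti-symmetric Gaussian equilibrium (whose bulk kernel is the sine kernel by the Mehta--Rosenzweig computation) via the local relaxation flow of Erd\H{o}s--Schlein--Yau--Yin, and a Green function comparison to remove the Gaussian component. The only cosmetic difference is in the comparison step: the paper constructs an initial ensemble $M_0$ whose time-$t$ evolution matches the moments of $M$ asymptotically (following the Bernoulli universality lemma), whereas you run the flow from $M$ itself and observe that the fourth moments drift by only $O(t_N)=O(N^{-1+\epsilon})$, which is within the tolerance of the same comparison theorem.
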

The sine kernel behavior follows from now standard arguments developed by L. Erd\"os, B. Schlein, H.T. Yau and their collaborators. The joint distribution of the eigenvalues of an anti-symmetric Gaussian matrix can be computed explicitly and has a determinantal structure. This has been previously used to compute the asymptotic correlations between the eigenvalues in the bulk \cite{rosenzweig}. One can use the Dyson Brownian/local relaxation flow approach of \cite{localrelax}, combined with the strong local semicircle law, to show that sine-kernel universality extends from the Gaussian model to the matrices $M$.

We now turn to the second result, which describes the spectrum of $D$. Define
\[N^* =\begin{cases}
(N-1)/2 &\text{if } N \text{ is odd},\\
N/2 &\text{if } N \text{ is even}.
\end{cases}
\]
Let $\lambda_i(D)$, $-N^*\le i \le N^*$ be the eigenvalues of $D$. When $N$ is odd, there is always one real eigenvalue, which we label $\lambda_0(D)$, at distance of order $N$ from the imaginary axis. The other eigenvalues are complex and symmetrically distributed about the real axis, close to the line $\Re z = -1/2$: $\lambda_{-i}(D)=\overline{\lambda_i(D)}$. More interestingly, we prove a certain probabilistic interlacing between the eigenvalues of $M$ and the imaginary part of the eigenvalues $2D+I$. 

The Weyl interlacing theorem for Hermitian finite rank perturbations is well-known and often used in random matrix theory. Given a Hermitian matrix $A$ and a (Hermitian) positive semi-definite rank 1 perturbation $B$, the eigenvalues of $A$ and $A+B$ are interlaced. That is, there is exactly one eigenvalue of $A+B$ between any two eigenvalues of $A$:
\[ \lambda_i(A)\le \lambda_i(A+B) \le \lambda_{i+1}(A).\]

The equation \eqref{eq: Mdef} expresses $i(2D+I)$ as a rank one perturbation of the matrix $M$, but the perturbation is not Hermitian. Nevertheless, with high probability as $N$ goes to infinity, we find that the eigenvalues of $M$ are interlaced with the imaginary parts of eigenvalues of $(2D+I)$:
\begin{theorem}\label{thm2}
\begin{enumerate}
\item If $N$ is odd, the matrix $D$ has a real eigenvalue $\lambda_0(D)$ such that
\[\frac{\lambda_0(D)}{(N-1)/2}\rightarrow 1\]
almost surely as $N\rightarrow \infty$ through the positive odd integers.
\item Fix $n\in \mathbf{Z}_+$, $\epsilon>0$, and $0<\alpha<1/2$. Then, for $N=N(\epsilon)$ large enough, and each index $i\in [\alpha N^*, (1-\alpha)N^*]$, the following holds with probability greater than $1-\epsilon$: 

For each pair 
\[(\lambda_{k}(M), \lambda_{k+1}(M)), \quad  k=i,...,i+n-1,\]
of consecutive eigenvalues of $M$, the matrix $D$ has an eigenvalue whose imaginary part $\Im(\lambda_{k}(D))$ lies in the the interval 
\[(\lambda_{k}(M)/2,\lambda_{k+1}(M)/2),\] 
and, for each $k=0,...,n-1$, we have 
\[\Re(\lambda_{k}(D))=-\frac{1}{2}+O_{\epsilon'}(1/N^{1-\epsilon'}),\] 
$\epsilon'>0$ arbitrary.
\end{enumerate}
Note that the second part of the theorem applies equally to the case of odd and even $N$.
\end{theorem}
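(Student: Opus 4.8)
The plan is to exploit the explicit algebraic relation $i(2D+I) = M + i|\mathbf{1}\rangle\langle\mathbf{1}|$ together with the resolvent of $M$. Write $M = iW$ with $W$ real anti-symmetric, and set $G(z) = (M-z)^{-1}$. Since $i(2D+I)$ is a rank-one perturbation of $M$, its eigenvalues $\mu$ are exactly the solutions of the secular equation
\[
1 + i\,\langle \mathbf{1}|\, (M-\mu)^{-1}|\mathbf{1}\rangle = 0,
\]
obtained from $\det(M + i|\mathbf{1}\rangle\langle\mathbf{1}| - \mu) = 0$ via the matrix determinant lemma (this identity is valid whenever $\mu$ is not an eigenvalue of $M$, which covers all the relevant $\mu$). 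Expanding in the eigenbasis $M v_j = \lambda_j(M) v_j$, and writing $c_j = \langle \mathbf{1}| v_j\rangle$, this becomes
\[
1 + i \sum_j \frac{|c_j|^2}{\lambda_j(M) - \mu} = 0.
\]
So the entire problem reduces to analyzing the zeros of this scalar meromorphic function of $\mu \in \mathbf{C}$, where the data $(\lambda_j(M), |c_j|^2)$ come from the random anti-symmetric matrix $M$.

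For part (1), I would take $\mu$ real and large. Using $\sum_j |c_j|^2 = \langle\mathbf{1}|\mathbf{1}\rangle = N$ and the fact (from Wigner's law, Theorem \ref{thm1}) that the $\lambda_j(M)$ all lie in $[-(2+o(1))\sqrt N, (2+o(1))\sqrt N]$, one sees that for $\mu$ of order $N$ the sum $\sum_j |c_j|^2/(\lambda_j(M)-\mu) \approx -N/\mu$, so the secular equation $1 - iN/\mu \approx 0$ has no real solution — but $i(2D+I)$ must have a purely imaginary eigenvalue corresponding to the real eigenvalue $\lambda_0(D)$ of $D$. The cleaner route is to work directly with $D$: one shows $\langle \mathbf{1}|$ is an approximate eigenvector, since $D\mathbf{1}$ has entries equal to the out-degrees, each $\tfrac{N-1}{2} + O(\sqrt{N\log N})$ by Hoeffding/Azuma, so $\|D\mathbf{1} - \tfrac{N-1}{2}\mathbf{1}\| = O(\sqrt{N^2\log N})$ while $\|\tfrac{N-1}{2}\mathbf{1}\|$ has order $N^{3/2}$; combined with the bound $\|2D+I\|_{op} \le 1 + \|M\|_{op} + N$ and a standard eigenvalue-perturbation argument (or a direct analysis of the secular equation near $\mu = iN/2$), this isolates one eigenvalue $\lambda_0(D) = \tfrac{N-1}{2}(1+o(1))$ real, and the almost-sure statement follows from Borel–Cantelli once the deviation probabilities are summable, which they are (subgaussian tails).

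For part (2), the heart of the matter is the interlacing of $\Im \mu$ with the $\lambda_k(M)/2$. Decompose $\mu = a + ib$. Since $M$ is anti-symmetric Hermitian, its spectrum is symmetric about $0$, and one checks that the perturbation respects the symmetry $W \mapsto -W$ in such a way that the non-real eigenvalues of $2D+I$ come in complex conjugate pairs clustered near $\Re z = -1/2$; equivalently the eigenvalues of $i(2D+I)$ cluster near $\Re = 0$ with imaginary parts near the $\lambda_k(M)$. The strategy is a \emph{perturbative} one: on the imaginary axis $\mu = ib$, the imaginary part of the secular function is
\[
\Im\!\left(1 + i\sum_j \frac{|c_j|^2}{\lambda_j - ib}\right) = \sum_j \frac{|c_j|^2\,\lambda_j}{\lambda_j^2 + b^2},
\]
wait — more usefully, restricting attention to $\mu$ near the real axis and using that the spectrum of $M$ near index $k$ has gaps of order $\sqrt N / N = 1/\sqrt N$ (rigidity plus the local law, and crucially the lower tail bound on gaps that rules out near-degeneracies with high probability — this is where Theorem \ref{thm1}, or rather the level-repulsion it encodes, is used), one shows that the real secular function $1 + i\sum_j |c_j|^2/(\lambda_j - \mu)$, when evaluated along the real $\mu$-axis between $\lambda_k(M)$ and $\lambda_{k+1}(M)$, would produce exactly one zero of a suitable truncation; then one perturbs off the axis. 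Concretely: the dominant terms are $j = k, k+1$, contributing a pole structure $i(|c_k|^2/(\lambda_k-\mu) + |c_{k+1}|^2/(\lambda_{k+1}-\mu))$ which, set against the $O(N/\sqrt N) = O(\sqrt N)$-sized regular part $1 + i\sum_{j\ne k,k+1}(\cdots)$, forces a solution $\mu$ with $\Re\mu = O(1/N^{1-\epsilon'})$ (this uses the delocalization bound $|c_j|^2 = O(N^{\epsilon'})$, i.e. $\|v_j\|_\infty^2 = O(N^{-1+\epsilon'})$, which transfers from the Hermitian theory as asserted in the abstract) and $\Im\mu$ trapped strictly between $\lambda_k/2$ and $\lambda_{k+1}/2$ by an intermediate-value / Rouché argument applied to the $2\times 2$ truncated problem and then stabilized against the remaining sum.

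The main obstacle, and the step I expect to require the most care, is making the perturbative separation rigorous uniformly over $n$ consecutive indices: one needs simultaneously (i) a high-probability lower bound on the local gaps $\lambda_{k+1}(M) - \lambda_k(M) \gtrsim N^{-1/2-\epsilon'}$ for all $k$ in the bulk range — available from the local semicircle law and level repulsion in the anti-symmetric ensemble, but requiring one to verify that the Erd\H{o}s–Schlein–Yau machinery indeed carries over (as claimed) to matrices of the form $iW$ — and (ii) control of the overlaps $|c_j|^2 = |\langle\mathbf{1}|v_j\rangle|^2$, which demands isotropic delocalization of the eigenvectors in the specific direction $|\mathbf{1}\rangle$; since $|\mathbf{1}\rangle$ is not a coordinate vector, one invokes an isotropic local law for $G(z)$ in the anti-symmetric setting, i.e. $\langle\mathbf{1}|G(z)|\mathbf{1}\rangle = N m_{sc}(z) + O(N^{1/2+\epsilon'})$ with high probability for $z = E + i\eta$, $\eta \sim N^{-1+\epsilon'}$. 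Granting both, the zero-counting for the scalar secular equation is a Rouché-theorem bookkeeping exercise: one shows the $2\times 2$ model has its zero in the desired open interval of the imaginary axis (after the $O(N^{-1+\epsilon'})$ real shift), and that the perturbation from the other $N-2$ poles is too small on a suitable contour to move that zero across either endpoint $\lambda_k/2$ or $\lambda_{k+1}/2$. The real-part estimate $\Re\lambda_k(D) = -1/2 + O(N^{-1+\epsilon'})$ then drops out of the same contour estimate, recalling $\lambda_k(D) = \tfrac{1}{2}(i^{-1}\mu_k - 1) = \tfrac{1}{2}(-i\mu_k - 1)$ so $\Re\lambda_k(D) = \tfrac{1}{2}(\Im\mu_k - 1) \cdot$ — here one must be careful with signs: $i(2D+I)$ has eigenvalue $\mu$ means $2D+I$ has eigenvalue $-i\mu$, so $\lambda(D) = \tfrac{1}{2}(-i\mu - 1)$, giving $\Re\lambda(D) = \tfrac{1}{2}\Im\mu - \tfrac{1}{2}$, and $\Im\mu = O(N^{-1+\epsilon'})$ — wait, that is not right either since $\Im\mu$ should be of order $\sqrt N$ (it sits near $\lambda_k(M)$); rather it is $\Re\mu$ that is small, giving $\Im(-i\mu) = -\Re\mu = O(N^{-1+\epsilon'})$, hence $\Re\lambda(D) = -\tfrac12 + O(N^{-1+\epsilon'})$ as claimed, while $\Im\lambda(D) = -\tfrac12\Re(-i\mu) \cdot$— I will sort the final sign chase in the writeup, but the structure is: small real part of $\mu$ $\Leftrightarrow$ real part of $\lambda(D)$ near $-1/2$; imaginary part of $\mu$ between $\lambda_k$ and $\lambda_{k+1}$ $\Leftrightarrow$ imaginary part of $\lambda(D)$ between $\lambda_k/2$ and $\lambda_{k+1}/2$.
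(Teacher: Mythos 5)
Your overall strategy for part (2) — the rank-one secular equation for $i(2D+I)$, a local analysis of its zeros near the imaginary axis of $M$'s spectrum using gap lower bounds and delocalization, and a Rouch\'e argument — is the same as the paper's. But the argument as proposed has a genuine gap: it is missing a \emph{lower} bound on the individual overlaps $|c_k|^2=|\langle \mathbf{1}, v_k(M)\rangle|^2$. You invoke only delocalization ($|c_j|^2\lesssim N^{\epsilon'}$, an upper bound) and an isotropic local law for $\langle\mathbf{1}|G(z)|\mathbf{1}\rangle$; neither rules out $|c_k|^2$ being anomalously small for the particular indices $k,k+1$ at hand, since the isotropic law only controls sums of $|c_j|^2$ over spectral windows, not single overlaps. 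If $|c_k|^2$ is small the pole of the secular function at $\lambda_k$ is weak: the derivative of $F$ at its zero $\mu_k\in(\lambda_k,\lambda_{k+1})$ is bounded below only by $|c_k|^2/(\mu_k-\lambda_k)^2$, and the conclusion $\Re\mu=O(N^{-1+\epsilon'})$ requires this derivative to be of order $N^{1-\epsilon}$, i.e.\ requires $|c_k|^2\ge N^{-\epsilon}$. The same lower bound (together with the upper bound, to control the ratio $|c_{k+1}|^2/|c_k|^2$) is what shows the zero $\mu_k$ is separated from both $\lambda_k$ and $\lambda_{k+1}$ by $\gtrsim N^{-1/2-\epsilon}$ (Lemma \ref{lma: separation}); without that separation your Rouch\'e contour passes too close to a pole and the needed bound $|F|\le N^{1/2+\epsilon}$ on the contour fails. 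In the paper this anti-concentration statement (Proposition \ref{prop: lowerbd}) is obtained by adapting the Bourgade--Yau eigenvector moment flow to the anti-symmetric Dyson Brownian motion, and it is the main technical investment of the whole proof; your proposal has no substitute for it.

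Two smaller points. For part (1), your ``cleaner route'' via the approximate eigenvector $\mathbf{1}$ needs care: $D$ is not normal, so a small residual $\|D\mathbf{1}-\tfrac{N-1}{2}\mathbf{1}\|$ does not by itself place an eigenvalue near $(N-1)/2$, and it does not show that eigenvalue is real. The secular-equation alternative you mention is the one that works: restricted to real $s$, $F(s)$ is real, odd, and monotone decreasing from its pole at $0$, so $F(s)=1$ has exactly one positive real solution, and Rouch\'e in a disk of radius $\sqrt{N}$ about $s=N$ locates it at $N(1+o(1))$. Finally, the gap lower bound you cite as ``level repulsion encoded in Theorem \ref{thm1}'' does not follow from averaged sine-kernel correlation functions; the paper needs the single-gap universality of Erd\H{o}s--Yau, verified for the anti-symmetric ensemble (Theorem \ref{ourgapthm}), to control an individual gap at a fixed index with probability $1-o(1)$.
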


There is a large literature on low rank perturbations of random matrices. Most of this is concerned with Hermitian perturbations of Wigner matrices. See for instance \cite{beynach}, \cite{feralpeche}, \cite{furedikomlos}, \cite{knowlesyin}, \cite{knowlesyin2}, \cite{pizzoetal}, \cite{capitaineetal}, \cite{renfrew}, \cite{renfrewsoshnikov}, \cite{ttao}. The results of T. Tao in \cite{ttao} apply to non-Hermitian matrices, and those of D. Renfrew and S. O'Rourke \cite{renfrew} to non-Hermitian perturbations of Wigner matrices. These works mostly deal with the distributional properties of outlier eigenvalues created by the perturbation, either far away from the support of the limit spectral distribution, or at its edge. See \cite[Theorem 2.7]{knowlesyin} and \cite[Theorem 2.7]{bloemendaletal} for results concerning the location of bulk (non-outlier) eigenvalues for low-rank deformations of Wigner and covariance matrices. Closer to the setting of this paper, let us mention that the asymptotic correlation functions of anti-Hermitian finite-rank deformations of GUE matrices have been calculated by Y.V. Fyodorov and B.A. Khoruzhenko \cite{fyodorovkhoruzhenko}. See also \cite{fyodorovsommers} for a discussion of non-Hermitian deformations of more general Hermitian and unitary models. 

Our initial motivation was provided by the tournament matrix model, but our argument is based on a general observation about non-Hermitian perturbations of random matrices. This does not appear to have been previously noticed even in the Gaussian case. In this case, one can give a self-contained proof which does not rely on recent universality results for the eigenvectors. We remark on this in Section \ref{sec: gueext}.

We end this introduction by noting that it would be interesting to describe the spectral statistics of \emph{regular} tournaments, that is, tournament matrices satisfying the row sum constraint $\sum_{j}D_{ij}=(N-1)/2$. Indeed, the results presented here were obtained as we were investigating the statistical properties of the spectra of this restricted ensemble of tournament matrices.

\subsection{Organization of the paper}
In Section \ref{sec: antigaussian}, we describe some properties of the Gaussian model for the anti-symmetric Hermitian class. We also summarize the computation of the correlation functions in the bulk. We present the equations for an appropriate Dyson dynamics on the eigenvalues and eigenvectors in Sections \ref{sec: antidbm} and \ref{sec: vantidbm}.

The proofs of Theorem \ref{thm1} and Theorem \ref{thm2} (2), rely on analogs for anti-symmetric Hermitian matrices, of results from recent work of P. Bourgade, L. Erd\"os, B. Schlein, H.T. Yau, and J. Yin, \cite{bourgadeyau,ey,localrelax}, concerning symmetric and Hermitian matrices.  We will present the modifications necessary in order to adapt the proofs of these results to anti-symmetric matrices. Replacing the dynamics in \cite{localrelax} by the anti-symmetric Dyson eigenvalue flow, one immediately obtains sine kernel universality in the bulk by the method in that paper; this is Theorem \ref{thm1}. We summarize the argument in Section \ref{sec: sine}.

The proof of Theorem \ref{thm2} (1) appears in Section \ref{sec: interlacing}. The proof of Part (2) of the theorem in  Section \ref{sec: inter2} is more intricate, and requires the results from \cite{bourgadeyau,bloemendaletal,ey}, suitably adapted to our case. The proofs of both parts of Theorem \ref{thm2} are based on the perturbation equation \eqref{eqn: Fsdef} for the eigenvalues of $2D+I$. The precise information from \cite{bourgadeyau, ey}
allows us to analyze that equation essentially at the level of single eigenvalue spacings, and locate the eigenvalues of $2D+I$. 

A central estimate is contained in Lemma \ref{lma: separation}, whose proof appears in Section \ref{sec: sep-lma}. The argument uses an anti-concentration result on the eigenvectors of the matrix $M$, which we obtain in Section \ref{bysection} by adapting a result of Bourgade and Yau \cite{bourgadeyau} to the anti-symmetric setting. Some relevant computations are relegated to Appendix \ref{sec: by}. 

Finally, in Section \ref{sec: gueext}, we remark that the argument for Theorem \ref{thm2}, (2) applies more generally to certain perturbations of Hermitian Wigner matrices in the GUE class.

\subsection{Acknowledgments}
We thank Michael Aizenman for his continued interest in this work. P.S. would like to thank Paul Bourgade for his explanations concerning various aspects of the paper \cite{bourgadeyau}, the Weizmann Institute for its hospitality during a visit where this work was begun, and the Center for Mathematical Sciences and Applications at Harvard University for financial support.

\newpage

\section{The Gaussian anti-symmetric model}
\label{sec: antigaussian}
In this section we introduce a Gaussian anti-symmetric matrix model \eqref{eqn: antiguemodel}, and compute its correlation functions as $N\rightarrow \infty$ (Section \ref{sec: correlation}). In Sections \ref{sec: antidbm} and \ref{sec: vantidbm}, we present the anti-symmetric analogs of the Dyson Brownian motion equations. These are used in the proofs of the results from \cite{bourgadeyau,localrelax,bulkuniv,ey} we appeal to.

For the remainder of the paper, $N$ will denote an odd positive integer. The proofs in the case of even $N$ are for the most part identical, and sometimes simpler.
\subsection{The matrix model}
Consider a real $N\times N$ anti-symmetric matrix $K$, such that the entries $g_{ij}$, $1 \le i<j\le N $ above the diagonal have distribution $\frac{1}{\sqrt{2\pi}}e^{-x^2/2}$, and the diagonal entries are zero:
\begin{align} \label{eqn: antiguemodel}
g_{ij} &=-g_{ji}, \quad 1\le i<j\le N\\
g_{ii}&=0.  \quad i=1,\ldots, N \nonumber
\end{align}
We begin with a few elementary consequences of the general form of $K$. By the anti-symmetry condition, $K$ has $N$ imaginary eigenvalues $i\nu_{-(N-1)/2}, \ldots, \nu_0, \ldots, i\nu_{(N-1)/2}$, with corresponding unit eigenvectors $v_j=v_j(K)$, $j=-(N-1)/2, \ldots, (N-1)/2$. Since $K$ is real, we have
\[K \overline{v_j}=\overline{K v_j} = -i\nu_j \overline{v_j},\]
implying that the spectrum is symmetric about the real axis:
\[\nu_{-j} = -\nu_j,\]
and the eigenvectors satisfy:
\begin{align*}
v_{-j}&=\overline{v_{j}}, \  j=1, \ldots, (N-1)/2\\
v_j &\in \mathbf{R}^{N},\  j=0.
\end{align*}
The matrix $H=-iK$ is Hermitian, with eigenvalues $\lambda_j=\nu_j$ and the same eigenvectors as $H$. By orthogonality of the eigenvectors with respect to the Hermitian inner product, we have, for $1\le i,j \le N$:
\[ v_i\cdot \overline{v_{-j}}= 0 = v_i \cdot v_j.\]
Here, we have denoted by $\cdot$ the real inner product on $\mathbf{R}^N$. Expanding and taking real and imaginary parts, we have
\begin{align*}
\Re v_i\cdot \Re v_j &= \Im v_i\cdot \Im v_j = 0\\
\Im v_i \cdot \Re v_j &=  \Im v_j \cdot \Re v_i  = 0
\end{align*}
We thus conclude:
\begin{enumerate}
\item The $N$ vectors $\Re v_1, \ldots, \Re v_{(N-1)/2}, v_0, \Im v_1, \ldots \Im v_{(N-1)/2}$ form an orthogonal basis of $\mathbf{R}^N$ with the usual real Euclidean inner product.
\item $|v_0|^2=\frac{1}{2}$, and $|\Re v_j|^2 = |\Im v_j |^2 = \frac{1}{2}$ for $j=1, \ldots, (N-1)/2$.
\end{enumerate}

If $O$ is a real orthogonal matrix ($OO^t=I$), then the $(i,j)$ entry of the matrix
$OK(O)^{-1}= OKO^t$ is given by
\[\sum_{k,l=1}^N O_{il}O_{jk}g_{kl} = -\sum_{k,l=1}^N O_{jk}O_{il}g_{lk},\]
so that the conjugated matrix has the same distribution as $K$. Given a Borel subset $A \subset O(N)$, the orthogonal group, and any orthogonal matrix $O$, we have
\begin{align*}
 &\mathbf{P}\left( \sqrt{2}(\Re v_j(H),\Im v_j(H)) \in A\right)\\
= \ & \mathbf{P}\left( \sqrt{2}( \Re v_j( O^{-1}HO), \Im v_j(O^{-1}HO)) \in OA \right)\\
= \  &\mathbf{P}\left( \sqrt{2}(\Re v_j(H),\Im v_j(H)) \in OA \right).
\end{align*}
In the second step we have used the distributional invariance of the matrix under conjugation. So the anti-symmetric Gaussian distribution on the entries of $K$ induces the Haar distribution on
\[\sqrt{2}(v_0,\Re v_1,\ldots,\Re v_{(N-1)/2},\Im v_1,\ldots,\Im v_{(N-1)/2})\in O(N).\]

\subsection{Correlation functions in the bulk}\label{sec: correlation}
Eigenvalue correlation functions for the anti-symmetric Gaussian ensemble, as well as their scaling limits, can be computed explicitly. This was done long ago by M. L. Mehta and N. Rosenzweig \cite{rosenzweig}. We summarize the result of their computation.

The matrix $H=-iK$ is Hermitian, and the probability density function of its eigenvalues is given by (see \cite[Eq. (10)]{rosenzweig})
\begin{equation}
Z_N^{-1}\prod_{1\le i<j \le (N-1)/2} |\lambda_i^2-\lambda_j^2|^2\prod_{i=1}^{(N-1)/2}\lambda_i^2 e^{-\frac{\lambda_i^2}{2}}.\label{eq: pdf}
\end{equation}

The Vandermonde determinant in \eqref{eq: pdf} is equals 
\begin{align}
&\prod_{1\le i \le (N-1)/2}  \lambda_i^2 \prod_{1\le i< j \le (N-1)/2} |\lambda_i^2-\lambda_j^2|^2 \nonumber \\
=& \ \mathrm{det} \left(\begin{array}{cccc} \label{eqn: det}
\lambda_1 & \lambda_1^3 & \cdots & \lambda_1^{N-2}\\
& & \ddots  & \\
\lambda_{(N-1)/2} & \lambda_{(N-1)/2}^3 & \cdots & \lambda_{(N-1)/2}^{N-2}.
\end{array}\right)^2.
\end{align}
Let $P_k(x)$ denote the $k$-th order (monic) Hermite polynomial
\[P_k(x) = (-1)^k e^{x^2/2}\left(\frac{\mathrm{d}}{\mathrm{d}x}\right)^ke^{-x^2/2}.\]
By multilinearity, the determinant in \eqref{eqn: det} is expressed in terms of these polynomials:
\[\left(\mathrm{det} \left(p_{2k-1}(\lambda_i)\right)_{1\le  i,k \le (N-1)/2}\right)^2 = \mathrm{det}(K_N(\lambda_i,\lambda_j))_{1\le i,j\le N},\]
with $K_N(x,y)=2\sum_{k=0}^{(N-1)/2-1}P_{2k-1}(x)P_{2k-1}(y)$. 

Replacing the polynomials $P_k(x)$ with the normalized Hermite functions
\[\psi_k(x) = \frac{1}{(\sqrt{2\pi}k!)^{1/2}}P_k(x)e^{-x^2/4},\]
we find that the $k$-point correlation function for the positive eigenvalues has the determinantal expression
\[p_{k}(\lambda_1,...,\lambda_k) =  \frac{(N-k)!}{N!}\cdot \mathrm{det}(\tilde{K}_N(\lambda_i,\lambda_j))_{1\le i,j\le N}, \quad 1\le k\le (N-1)/2\]
with
\[\tilde{K}_N(x,y) = 2\sum_{k=0}^{(N-1)/2-1}\psi_{2k-1}(x)\psi_{2k-1}(y),\]
and
\[\int_{\lambda_1,\ldots,\lambda_k\ge 0}p_k(\lambda_1,...,\lambda_k)\,\mathrm{d}\lambda_1\ldots\lambda_k=1.\]

The argument used to prove the Christoffel-Darboux formula \cite[Section 3.2.1]{guionnetzeitouni} gives the relation
\begin{align}
\tilde{K}_N(x,y) &= \left(\frac{N-1}{2}\right)^{1/2}\frac{ \psi_{N-1}(x)\psi_{N-2}(y)-\psi_{N-1}(x)\psi_{N-2}(y) }{ x-y } \label{KN-kernel}\\
& \quad +\left(\frac{N-1}{2}\right)^{1/2} \frac{ \psi_{N-1}(x)\psi_{N-2}(y)+\psi_{N-1}(x)\psi_{N-2}(y) }{ x+y }. \nonumber
\end{align}
Recall the asymptotics for the Hermite functions:
\begin{align}
\psi_{2k}(x) = \frac{(-1)^k}{N^{1/4}\sqrt{\pi}}\cos(\sqrt{N}x)+o(N^{-1/4}) \label{PRasymp1}\\
\psi_{2k+1}(x) = \frac{(-1)^k}{N^{1/4}\sqrt{\pi}}\sin(\sqrt{N}x)+o(N^{-1/4}), \label{PRasymp2}
\end{align}
uniformly for $x$ in a compact set.
Combining \eqref{KN-kernel} and \eqref{PRasymp1}, \eqref{PRasymp2}, we have the approximation
\[\tilde{K}_N(x,y)\sim \frac{\sqrt{2(N-1)}}{\pi}\left(\frac{\sin(x-y)}{x-y}+\frac{\sin(x+y)}{x+y}\right).\]
This formula gives the scaling limit of the correlation functions at 0. At energies $x$, $y$ of order $\sqrt{N}E$ with $0<E<2$, the asymptotics \eqref{PRasymp1}, \eqref{PRasymp2} we find the sine kernel behavior
\[\tilde{K}_N(x,y)\sim \frac{\sqrt{2(N-1)}}{\pi}\frac{\sin(x-y)}{x-y}.\]
as in the GUE case.
\subsection{Anti-symmetric Dyson Brownian Motion} \label{sec: antidbm}
In this section, we define the \emph{anti-symmetric Dyson Brownian motion}. 

Given an $N\times N$ anti-symmetric Hermitian matrix $M_0$, we say that the matrix-valued process $t\mapsto H(t)$, $t\ge 0$ is an \emph{anti-symmetric Brownian motion} started at $M_0$ if
\begin{equation}\label{eqn: matrixDBM}
H(t) = M_0 + \frac{i}{\sqrt{N}}G(t),
\end{equation}
where 
\begin{gather}
(G_{i,j}(t))_{i>j, t\ge 0} \text{ is an } \mathbf{R}^{N(N-1)/2}\text{-valued Brownian motion} \label{eqn: asBM1}\\
G_{ij}(t)=-G_{ji}(t) \text{ for all } 1\le i,j \le N \text{ and } t \ge 0.\label{eqn: asBM2}
\end{gather}
We will also consider the \emph{anti-symmetric Ornstein-Uhlenbeck process}, for which the $N(N-1)/2$-dimensional Brownian motion in condition \eqref{eqn: asBM1} is replaced by an Ornstein-Uhlenbeck process $(G^{OU}(t)_{ij})$, $1\le j\le i\le N$.

We define a set of stochastic differential equations associated with \eqref{eqn: matrixDBM}. Let 
\[\vec{\lambda}(0) = (\lambda_1(M_0), \ldots, \lambda_{(N-1)/2}(M_0))\]
be a vector of positive values. The \emph{anti-symmetric Dyson eigenvalue flow driven by Brownian motion} started at $\vec{\lambda}(0)$ is the solution of the system
\begin{equation}\label{eqn: asdbm}
\mathrm{d}\lambda_j = \frac{\mathrm{d}G_{2j-1,2j}}{\sqrt{N}}+\left(\frac{1}{N}\sum_{l\neq j} \frac{1}{\lambda_j-\lambda_l}+\frac{1}{N}\sum_{l\neq j} \frac{1}{\lambda_j+\lambda_l}+\frac{1}{N}\frac{1}{\lambda_j}\right)\mathrm{d}t,
\end{equation}
for
\[1\le j \le (N-1)/2.\]
Here $G$ is as in \eqref{eqn: asBM1}, \eqref{eqn: asBM2}. We will also have use for the \emph{anti-symmetric Dyson eigenvalue flow driven by the Ornstein-Uhlenbeck process}:
\begin{equation}\label{eqn: asdbm-ornstein}
\mathrm{d}\lambda_j = \frac{\mathrm{d}G_{2j-1,2j}}{\sqrt{N}}+\left(-\frac{1}{2N}\lambda_j+\frac{1}{N}\sum_{l\neq j} \frac{1}{\lambda_j-\lambda_l}+\frac{1}{N}\sum_{l\neq j} \frac{1}{\lambda_j+\lambda_l}+\frac{1}{N}\frac{1}{\lambda_j}\right)\mathrm{d}t.
\end{equation}

The essential point is that for each $t\ge 0$, the distribution of the positive eigenvalues of the matrix evolution \eqref{eqn: matrixDBM} coincides with that of the solution of \eqref{eqn: asdbm}. A similar statement holds for the solutions of \eqref{eqn: asdbm-ornstein}, provided we replace the Brownian motion in \eqref{eqn: asBM1} by a matrix-valued Ornstein-Uhlenbeck process.

\subsection{Anti-symmetric eigenvector flow}
\label{sec: vantidbm}
The main tool to derive Lemma \ref{eqn: quelemma} in Section \ref{bysection} will be the \emph{anti-symmetric} Dyson Brownian vector flow. The symmetric and Hermitian versions of this  system were analyzed P. Bourgade and H.T. Yau \cite{bourgadeyau}, and we will use their methods.

The anti-symmetric eigenvector flow is the $(N+1)/2$-dimensional system of stochastic differential equations
\begin{align}
\mathrm{d}v_0 &= \frac{i}{\sqrt{2}} \sum_{l\neq 0}\frac{\mathrm{d}G_{0,2l-1}-i\mathrm{d}G_{0,2l}}{\lambda_l} v_l +\frac{1}{\sqrt{2}}\sum_{l\neq 0}\frac{\mathrm{d}G_{0,2l-1}-i\mathrm{d}G_{0,2l}}{\lambda_l}\overline{v_l} \label{eqn: ev0DBM}  \\
&\quad -\frac{1}{\sqrt{2}} \sum_{l\neq 0} \frac{\mathrm{d}t}{\lambda_l^2}, \nonumber \\
\mathrm{d}v_k &= \frac{i}{2}\sum_{k\neq l} \frac{\mathrm{d}G_{2k-1,2l-1}+i\mathrm{d}G_{2k,2l-1}-i\mathrm{d}G_{2k-1,2l}+\mathrm{d}G_{2k,2l}}{\lambda_k-\lambda_l} \label{eqn: evDBM} \\
&\ +\frac{i}{2}\sum_{k\neq l} \frac{\mathrm{d}G_{2k-1,2l-1}+i\mathrm{d}G_{2k,2l-1}+i\mathrm{d}G_{2k-1,2l}-\mathrm{d}G_{2k,2l}}{\lambda_k+\lambda_l} \nonumber\\
&\ +\frac{i}{\sqrt{2}}\frac{\mathrm{d}G_{2k-1,0}+i\mathrm{d}G_{2k,0}}{\lambda_k} \nonumber \\
&\ -\frac{i}{2}\sum_{k\neq l} \left(\frac{\mathrm{d}t}{(\lambda_k-\lambda_l)^2}v_l+\frac{\mathrm{d}t}{(\lambda_k+\lambda_l)^2}\overline{v_l}+\frac{\mathrm{d}t}{\lambda_k^2}v_0 \right). \nonumber
\end{align}
Here, $G$ is an $N$-dimensional anti-symmetric matrix Brownian motion as in \eqref{eqn: asBM1}, \eqref{eqn: asBM2}. The system is expressed in complex coordinates $v_l=\Re v_l +i \Im v_l$. Coupled with the anti-symmetric eigenvalue flow \eqref{eqn: asdbm},  the equations \eqref{eqn: ev0DBM}, \eqref{eqn: evDBM} form the anti-symmetric Dyson Brownian motion.

The significance of these equations derives from the following: if we let 
\begin{equation}\label{eqn: sol}
\left(\vec{\lambda}(t),(v_j(t))_{0\le j\le \frac{N-1}{2}})\right),
\end{equation}
be the solution of \eqref{eqn: asdbm}, \eqref{eqn: ev0DBM}, \eqref{eqn: evDBM} with initial data $\vec{\lambda}(0)=\vec{\lambda}(M_0)$ and 
\[(v_0(0), \ldots,v_{\frac{N-1}{2}}(0))=\left(v_0(M_0),v_1(M_0),\ldots, v_{\frac{N-1}{2}}(M_0)\right),\]
then the distribution of the vector \eqref{eqn: sol} coincides with that of the (non-negative) eigenvalues of anti-symmetric Dyson Brownian motion \eqref{eqn: matrixDBM} and their associated eigenvectors.

\section{Sine Kernel Universality for anti-symmetric matrices}\label{sec: sine}
The object of this section is the derivation of Theorem \ref{thm1}. The first input in the proof is the \emph{local semicircle law} (see \cite{sc1, sc2}). The $N\times N$ matrix $M$ is a Hermitian matrix with independent, identically distributed entries, except for the symmetry constraint. The proof of the strong local semicircle given in \cite{sc1} applies to the matrix $M$. Concerning this last point, we note that in the recent literature, ``Hermitian matrices'' are generally assumed to satisfy additional non-degeneracy conditions on the real and imaginary parts. Such conditions ensure that the eigenvalue and eigenvector distributions on small scales asymptotically coincide with those of the GUE. The results in \cite{sc1} apply to random matrices which are Hermitian in the usual sense, see \cite[Section 2]{sc1}.

The following version of the semicircle law follows directly from \cite[Theorem 2.3]{sc1}.
\begin{theorem}[Local semicircle law] \label{eqn: sclaw} For an $N\times N$ matrix $M$ as in \eqref{eq: Mdef} and $z=E+i\eta$, $\eta>0$, define the resolvent matrix
\[R(z) = \left(\frac{1}{\sqrt{N}}M-z\right)^{-1},\]
and its normalized trace
\[m_N(z) = \frac{1}{N}\operatorname{tr}R(z),\]
as well as the Stieltjes transform of the semicircle distribution
\begin{equation}\label{eqn: mscdef}
m_{sc}(z) =\int_{\mathbf{R}} \frac{1}{x-z}\rho_{sc}(x)\,\mathrm{d}x,
\end{equation}
with
\begin{equation}
\label{eq: scdist}
\rho_{sc}(x) = \frac{1}{2\pi} \mathbf{1}_{[-2,2]}(x)\sqrt{4-x^2}.
\end{equation}
Then, for $\epsilon,c>0$, and $k>0$, we have
\begin{equation}
\label{eq: stronglocalsc}
|m_N(z)-m_{sc}(z)|\le \frac{1}{N^{1-\epsilon}\eta},
\end{equation}
uniformly in the energy $E=\Re z\in [-10,10]$ and $\eta \in [ N^{-1+c},1]$ with probability greater than $1-C_kN^{-k}$. The probability refers to the uniform measure over all anti-symmetric matrices with entries in $\{\pm 1\}$.
Moreover, for each $c,\delta>0$, and $k>0$ we have the entry-wise bound:
\begin{equation}
\sup_{i,j} |R_{ij}(z)-\delta_{ij}m_{sc}(z)|\le \frac{1}{\sqrt{N^{1-\epsilon}\eta}}
\end{equation}
uniformly for $\delta < E<2-\delta$ and $\eta \in [ N^{-1+c},1]$, with probability greater than $1-CN^{-k}$.
\end{theorem}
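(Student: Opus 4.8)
The plan is to deduce the statement directly from the general local semicircle law of \cite{sc1}, by checking that the matrix $M$ of \eqref{eq: Mdef} satisfies the hypotheses of \cite[Section 2]{sc1} and then transcribing the conclusions of \cite[Theorem 2.3]{sc1} into the form asserted. First I would record the structure of $M$. Writing $M=iW$ with $W$ real anti-symmetric and $W_{ij}\in\{\pm1\}$ for $i\neq j$, one has $M_{ii}=0$, $M_{ij}=\overline{M_{ji}}$, and the entries $\{M_{ij}\}_{i<j}$ are independent, each uniform on $\{\pm i\}$; in particular $\mathbf{E}M_{ij}=0$ and $\mathbf{E}|M_{ij}|^2=1$ for $i\neq j$. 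So $M$ is Hermitian in the usual sense with independent entries above the diagonal, and after the normalization $M/\sqrt N$ appearing in $R(z)$ the variance profile is $s_{ij}:=\mathbf{E}\,|M_{ij}/\sqrt N|^2=(1-\delta_{ij})/N$, whose row sums $\sum_j s_{ij}=1-1/N=1+O(N^{-1})$ meet the (approximate) row-sum normalization and flatness conditions of \cite{sc1}, the vanishing diagonal being a permitted limiting case. Since $|M_{ij}|\le1$ deterministically, the subexponential moment bound on the entries holds with room to spare.

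The one point that genuinely requires attention is that the notion of ``Hermitian random matrix'' invoked from \cite{sc1} must be the one that does \emph{not} impose the extra non-degeneracy constraints on the real and imaginary parts of the entries that are sometimes added to force GUE-type behavior. Indeed $\mathbf{E}\,M_{ij}^2=\mathbf{E}\,(iW_{ij})^2=-1\neq0$, so $M$ is not in the GUE universality class. The resolution is that the local semicircle law is sensitive to the entry laws only through the variance matrix $(s_{ij})$ and not through the pseudo-variances $\mathbf{E}\,M_{ij}^2$, and the hypotheses of \cite[Section 2]{sc1} are formulated precisely at this level of generality; the $\pm i$, mean-zero, bounded, independent structure is all that the self-consistent (Schur complement / fluctuation averaging) argument there uses. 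Granting this, \cite[Theorem 2.3]{sc1} applies verbatim and yields, with probability at least $1-C_kN^{-k}$ for every $k$, the averaged bound $|m_N(z)-m_{sc}(z)|\le (N^{1-\epsilon}\eta)^{-1}$ uniformly for $E\in[-10,10]$, $\eta\in[N^{-1+c},1]$, and the entrywise bound $\sup_{i,j}|R_{ij}(z)-\delta_{ij}m_{sc}(z)|\le (N^{1-\epsilon}\eta)^{-1/2}$ in the bulk $\delta<E<2-\delta$, where the self-consistent analysis gives the optimal fluctuation exponent. These are exactly \eqref{eq: stronglocalsc} and the displayed entrywise estimate.

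Since no serious mathematical obstacle arises, the only real work is bookkeeping: matching the normalization convention of \cite{sc1} (their matrix is usually taken with $O(1/N)$ variances from the outset, against our $M/\sqrt N$), translating their ``overwhelming probability'' conclusions into the uniform $1-C_kN^{-k}$ form, and confirming that the exceptional energy/scale set in their statement is absorbed there. The parity of $N$ is irrelevant at this scale: the eigenvalue forced to $0$ when $N$ is odd is a single eigenvalue and perturbs neither the macroscopic nor the mesoscopic behavior of $m_N(z)$ or of the resolvent entries. The main thing to get right, and the closest thing to an obstacle, is therefore the careful citation of \cite{sc1} to the effect that its framework covers the anti-symmetric Hermitian class with $\mathbf{E}\,M_{ij}^2\neq0$.
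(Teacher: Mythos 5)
Your proposal is correct and follows essentially the same route as the paper: both reduce the statement to a direct application of \cite[Theorem 2.3]{sc1}, with the only substantive point being that the framework of \cite[Section 2]{sc1} depends on the entries only through the variance profile and hence does not require the non-degeneracy of real and imaginary parts (which fails here since $\mathbf{E}\,M_{ij}^2=-1$). Your verification of the variance normalization and boundedness of the entries is exactly the bookkeeping the paper implicitly relies on.
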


The local semicircle law, Theorem \ref{eqn: sclaw}, is the input to the local relaxation flow approach developed in \cite{localrelax} by Erd\"os, Schlein, Yau and Yin. We now explain their argument, adapted to our context. 

Consider a matrix of the form
\begin{equation}\label{eqn: flow}
T(t)=e^{-t/2}M_0+(1-e^{-t})^{1/2}iG,
\end{equation}
where $M_0$ is an anti-symmetric Wigner matrix and $G$ is an independent Gaussian matrix as in \eqref{eqn: antiguemodel}. For each $t\ge 0$, the distribution of each entry $T_{ij}(t)$ coincides with that of an Ornstein-Uhlenbeck flow with initial distribution $(M_0)_{ij}$. If $G$ is normalized so that
\[G_{ij} = \frac{1}{\sqrt{N}}g_{ij}, \ g_{ij} \sim N(0,1),\]
for $i>j$, the distribution at time $t$ of the $(N-1)/2$ positive eigenvalues of $T(t)$ coincides with that of an anti-symmetric Dyson Brownian motion, given by \eqref{eqn: asdbm-ornstein}, with the eigenvalues of $T(0)=M_0$ as initial condition
\[\vec{\lambda}(0) = (\lambda_1(M_0), \ldots, \lambda_{(N-1)/2}(M_0)).\]

The equilibrium measure for the dynamics on $\{\lambda_i(t)\}_{1\le i\le (N-1)/2}$ is given by \eqref{eq: pdf}. This measure is of the form required by \emph{Assumption I'} in \cite{localrelax}. That is, the asymptotic distribution of \eqref{eqn: flow} for $t\rightarrow \infty$ can be written as
\begin{equation}
\mu_N(\mathrm{d}\mathbf{x})= \frac{e^{-\beta \mathcal{H}_N(\mathbf{x})}}{Z_\beta},
\end{equation}
\[\mathcal{H}_N(\mathbf{x})= \sum_{j=1}^N U(x_j) -\frac{1}{N}\sum_{i<j} \log|x_i-x_j|-\frac{1}{N}\sum_{i<j}\log|x_i+x_j|-\frac{c_N}{N}\sum_{j}\log|x_j|.\]
The density \eqref{eq: pdf} takes this form with $\beta=2$ and $U(x_j)=x_j^2$.

By \cite[Theorem 2.1]{localrelax} and the local semicircle law \eqref{eq: stronglocalsc}, we have the following
\begin{theorem}
Let $E\in (0,2)$ with $b>0$ such that $[E-b,E+b]\subset (0,2)$. Denote by $p_{t,n,N}$ the $n$-point correlation functions of the matrix model $T(t)/N^{1/2}$ \eqref{eqn: flow} of size $N$ and by $p_{n,N,G}$ the $n$-point correlation function of the Gaussian anti-symmetric model. We have, for any smooth, compactly-supported real-valued function $Q$, and any $\epsilon>0$:
\begin{multline} \label{DBMflow} \lim_{N\rightarrow \infty} \sup_{t\ge N^{-1+\epsilon}}\int_{E-b}^{E+b}\frac{\mathrm{d}E'}{2b}\int_{\mathbf{R}^d} Q(x_1,\ldots,x_n) \\
\times \frac{1}{\rho_{sc}(E)}(p_{t,n,M}-p_{n,N,G})\left(E'+\frac{x_1}{N\rho_{sc}(E)}, \ldots, E'+\frac{x_n}{N\rho_{sc}(E)}\right)\,\mathrm{d}x_1\ldots \mathrm{d}x_n =0.
\end{multline}
\end{theorem}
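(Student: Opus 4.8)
The plan is to deduce \eqref{DBMflow} directly from the local relaxation flow theorem of Erd\"os, Schlein, Yau and Yin, \cite[Theorem 2.1]{localrelax}, applied to the anti-symmetric Dyson Brownian motion \eqref{eqn: asdbm-ornstein}. The work therefore consists in checking that the structural hypotheses of that theorem are met in the present setting. There are three of them: (i) the positive eigenvalue process of the matrix flow \eqref{eqn: flow} solves the stochastic system \eqref{eqn: asdbm-ornstein} with initial data the positive eigenvalues of $M_0$, which is exactly the content of Section \ref{sec: antidbm}; (ii) the equilibrium measure of this dynamics is the density \eqref{eq: pdf}, which is of the form required by \emph{Assumption I'} of \cite{localrelax}, with inverse temperature $\beta = 2$ and confining potential $U(x) = x^2$; and (iii) the initial configuration obeys the rigidity (``accuracy'') estimate $|\lambda_j(M_0)/\sqrt{N} - \gamma_j|\le N^{-1+\epsilon}$ with overwhelming probability for all bulk indices $j$, where $\gamma_j$ is the classical location of the $j$-th positive eigenvalue, defined through the semicircle law restricted to $(0,2)$.

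For input (iii) I would invoke the strong local semicircle law, Theorem \ref{eqn: sclaw}. Since $M_0$ is a Hermitian Wigner matrix in the ordinary sense, the Helffer--Sj\"ostrand argument of \cite{sc1,ey} upgrades the bound \eqref{eq: stronglocalsc} on the normalized trace of the resolvent to the rigidity estimate for individual eigenvalues, uniformly over indices in the bulk; the symmetry $\lambda_{-j} = -\lambda_j$ transfers this to the positive eigenvalues and their classical locations, so the initial-data hypotheses of \cite[Theorem 2.1]{localrelax} hold with probability $1-O_k(N^{-k})$ for every $k$.

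Input (ii) requires checking that the two features of the anti-symmetric Hamiltonian $\mathcal{H}_N$ which are absent in the $\beta$-ensemble treated in \cite{localrelax}, namely the reflection term $-N^{-1}\sum_{i<j}\log|x_i+x_j|$ and the logarithmic singularity $-c_N N^{-1}\sum_j\log|x_j|$ at the origin, do not obstruct the construction. On the set where all coordinates are positive (full measure under \eqref{eq: pdf}), both of these are convex functions of the configuration, and the logarithmic repulsion $-N^{-1}\sum_{i<j}\log|x_i-x_j|$ is convex on the Weyl chamber as well; hence $\nabla^2\mathcal{H}_N$ is bounded below by $U'' \equiv 2 > 0$, which is more than enough to run the local relaxation flow (the modified dynamics with an extra quadratic well centred at the classical locations $\gamma_j$) and to establish the entropy/logarithmic-Sobolev dissipation estimate at the heart of \cite[Theorem 2.1]{localrelax}. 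One also notes that the eigenvalues stay strictly positive thanks to the $\log|x_j|$ repulsion, and that the classical locations inherit the square-root edge and bounded bulk density the theorem needs. Granting (i)--(iii), \cite[Theorem 2.1]{localrelax} yields \eqref{DBMflow}; the energy average over $[E-b,E+b]$ and the supremum over $t\ge N^{-1+\epsilon}$ are precisely the form of its conclusion, and the Gaussian anti-symmetric model is identified as the stationary state of \eqref{eqn: asdbm-ornstein}.

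The step I expect to be least routine, and which I would regard as the main obstacle, is confirming that the short-range approximation of the singular logarithmic interaction and the accompanying level-repulsion bounds used in \cite{localrelax} survive the addition of the kernels $\log|x_i+x_j|$ and $\log|x_j|$. At bulk energies $E\in(0,2)$ bounded away from $0$ and $2$, however, the reflected points $-x_j$ and the origin sit at macroscopic distance from the eigenvalues in play, so these extra kernels are smooth on the relevant microscopic scale and enter only as lower-order convex perturbations; I therefore expect this verification to go through with essentially cosmetic changes to the argument of \cite{localrelax}, the substantive content being the transcription of that proof to the strictly convex Hamiltonian $\mathcal{H}_N$ above.
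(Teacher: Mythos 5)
Your proposal is correct and follows essentially the same route as the paper: the paper likewise verifies that the equilibrium density \eqref{eq: pdf} has the form required by \emph{Assumption I'} of \cite{localrelax} (with $\beta=2$, $U(x)=x^2$), identifies the positive-eigenvalue process of \eqref{eqn: flow} with the anti-symmetric Dyson flow \eqref{eqn: asdbm-ornstein}, and feeds the local semicircle law, Theorem \ref{eqn: sclaw}, into \cite[Theorem 2.1]{localrelax}. Your additional remarks on the convexity of the extra $\log|x_i+x_j|$ and $\log|x_j|$ kernels and on the rigidity of the initial data are exactly the points the paper leaves implicit, and they are handled correctly.
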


Given the local semicircle law \eqref{eq: stronglocalsc} and the short-time universality result \eqref{DBMflow}, Theorem \ref{thm1} follows by the Green function comparison method of Erd\"os, Yau and Yin, introduced in \cite[Theorem 2.3]{bulkuniv}. Their method uses the local semicircle law and entry-wise substitution to conclude that when the first four moments of the entries of two matrices in the same symmetry class coincide approximately, so do the local eigenvalue statistics. Choosing a random matrix $M_0$ such that the first four moments of the corresponding matrix $T(t)$ coincide with $M$ in \eqref{eq: Mdef} asymptotically as in \cite[Lemma 3.4]{bernoulliuniv}, we obtain Theorem \ref{thm1}.

This procedure is by now standard. Since its application in our case presents no particular difficulty, we do not reproduce it here. See \cite[Section 3.2]{erdoescdm} or  \cite[Section 8]{sc1}, for detailed expositions.

\section{Proof of Theorem \ref{thm2}, Part (1)}
\label{sec: interlacing}

Before proceeding with the proof of Theorem \ref{thm2}, we introduce some notation. 

For a sequence $A_N$, $N\ge 1$, and $b\in \mathbf{R}$ we write 
\[A_N \lesssim N^b\]
if, for each $\epsilon, \epsilon'>0$, there is a constant $C(\epsilon, \epsilon')$ such that for all $N$ sufficiently large,
\[A_N\le C(\epsilon,\epsilon')N^{b+\epsilon},\] with probability at least $1-\epsilon'$. We also write $A_N \asymp N^b$ if for each $\epsilon,\epsilon'>0$, there is some $C(\epsilon,\epsilon')>0$ such that for all $N$ sufficiently large 
\[(1/C(\epsilon))\cdot N^{b-\epsilon} \le A_N \le C(\epsilon)N^{b+\epsilon}\]
with probability at least $1-\epsilon'$.

\begin{proof}
The matrix 
\[2D+I = -iM+|\mathbf{1}\rangle\langle \mathbf{1}|\] 
is a rank-one perturbation of the (anti-Hermitian) matrix $-iM$. Let $v_{j}(M)$, $j=-(N-1)/2, \ldots, (N-1)/2$ be the eigenvectors of $M$ corresponding to the eigenvalues $\lambda_j(M)$. Recall that $v_{-j}(M)=\overline{v_j(M)}$. Define the function $F:\mathbf{C}\rightarrow \mathbf{C}$ by
\begin{align}
F(s)&=\sum_{j=-(N-1)/2}^{(N-1)/2}\frac{|\langle \mathbf{1}, v_j(M) \rangle|^2}{s-i\lambda_j(M)} \label{eqn: Fsdef} \\
&=\frac{|\langle \mathbf{1}, v_0(M)\rangle|^2}{s}+2s\sum_{j=1}^{(N-1)/2}\frac{|\langle \mathbf{1}, v_j(M) \rangle|^2}{s^2+\lambda_j(M)^2}. \label{eqn: positive}
\end{align}
The eigenvalues of $2D+I$ are the solutions of the equation
\begin{equation}
F(s)=1.\label{eqn: Fseq}
\end{equation}
This equation is well-known. For a derivation based on a determinant identity, which applies also to higher rank perturbations, see \cite[Lemma 2.1]{ttao}.

The expression in \eqref{eqn: positive} is positive for positive, real $s$. It has a pole at $s=0$ and decreases monotonically to zero as $s\rightarrow \infty$. $F(s)$ is also an odd function of $s$, viewed as a real parameter. It follows that the equation \eqref{eqn: Fseq} has a single real solution $2\lambda_0(D)+1$, corresponding to an eigenvalue $\lambda_0(D)$ of $D$. To establish point (1) in Theorem \ref{thm2}, it remains to prove the statement regarding the location of $2\lambda_0(D)+1$. 

By \cite[Theorem 2.2]{renfrew} (with the parameter $\rho$ there equal to $-1$), for any $\delta>0$, all eigenvalues of the real anti-symmetric matrix $(1/i)M$ are almost surely contained in the the region
\begin{equation}
\label{eqn: regionE}
\sqrt{N}\mathcal{E}_{-1,\delta} =\{\mathrm{dist}([-2i\sqrt{N},2i\sqrt{N}],s)\le \delta \sqrt{N}\}
\end{equation}
for sufficiently large $N$. In particular, $|\lambda_j(M)|\le (2+\delta)\sqrt{N}$ for all $j$, so we can rewrite $F(s)$ as
\begin{align*}
\ &\frac{1}{s}\sum_{j=-(N-1)/2}^{(N-1)/2}|\langle \mathbf{1}, v_j(M)\rangle|^2 + \sum_{j=-(N-1)/2}^{(N-1)/2}i\lambda_j(M)\frac{|\langle \mathbf{1}, v_j(M)\rangle|^2}{(s-\lambda_j(M))s}\\
=\ & \frac{N}{s}+\sum_{j=-(N-1)/2}^{(N-1)/2}i\lambda_j(M)\frac{|\langle \mathbf{1}, v_j(M)\rangle|^2}{(s-\lambda_j(M))s}.
\end{align*}
If $s$ is in the complement of the region $\mathcal{E}_{-1,3\delta}$, this last expression is
\[N\left(\frac{1}{s} + O(1/\sqrt{N})\right).\]
By Rouch\'e's theorem, $N/s-1$ and $F(s)-1$ have the same number of zeros in the circle $\{|z-N|\le \sqrt{N}\}$ for large $N$, and so the unique real solution of $F(s)=1$ is equal to  $N(1+o(1))$. The first part of Theorem \ref{thm2} is now proved.
\end{proof}

\section{Proof of Theorem \ref{thm2}, Part (2)}\label{sec: inter2}
We turn to part (2) of the theorem. We use the notation introduced in the previous section throughout. We will exclusively refer to the eigenvalues of the anti-symmetric matrix $M$, and will occasionally write $\lambda_j$ for $\lambda_j(M)$ for simplicity.

On the imaginary axis, the function $F(s)$ is purely imaginary, with monotone imaginary part between $\lambda_j(M)$ and $\lambda_{j+1}(M)$, whenever 
\[|\langle v_j(M),\mathbf{1}\rangle|\neq 0 \text{ and } \lambda_j(M)\neq \lambda_{j+1}(M).\]
$F(s)$ has poles at $i\lambda_j(M)$ for each $j=-(N-1)/2,\ldots, (N-1)/2$.
Thus there is a unique
\[\mu_j(M)\in (\lambda_j(M),\lambda_{j+1}(M))\] 
such that
\[F(i\mu_j(M))=0.\]

Around the zero $i\mu_j$, we expand the function $F$ into a Taylor series. For $z\in\mathbf{C}$ with $|z|$ sufficiently small, 
\[F(i\mu_j(M) + z) = \sum_{m\ge 1} \frac{F^{(m)}(i\mu_j(M))}{m!}z^m\]
We estimate $F^{(m)}(i\mu_j)$, $m=0,1$, to find a solution of $F(z)=1$ near $z=0$ by truncating the Taylor series after the first term.

The estimates on $F$ will be obtained directly from the definition \eqref{eqn: Fsdef}. This requires bounds for
\begin{enumerate}
\item The distances $|\mu_k(M)-\lambda_j(M)|$ between a fixed zero and the eigenvalues.
\item The overlaps $|\langle \mathbf{1}, v_j(M)\rangle|^2$ between the perturbation and the eigenvectors.
\end{enumerate}
The relevant estimates appear in Sections \ref{sec: gaps} and \ref{sec: innerprod}, respectively. Using these bounds, we prove
\begin{proposition}\label{prop: exist}
For each $\epsilon>0$, and $i\in [\alpha (N-1)/2, (1-\alpha)(N-1)/2]$, there exists $s_j\in\mathbf{C}$ with $\lambda_j(M) \le \Im s_j \le \lambda_{j+1}(M)$ and  $\Re s_j \le N^{-1+\epsilon}$
such that
\[F(s_j)=1.\]
with probability $1-o(1)$.
\end{proposition}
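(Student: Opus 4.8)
The plan is to locate the root of $F(s)=1$ that sits near $s=i\mu_j(M)$ by a quantitative inverse function (Rouch\'e) argument, exploiting that $F'$ is very large at $i\mu_j(M)$. Write $f(z)=F(i\mu_j(M)+z)-1$, so $f(0)=-1$ because $F(i\mu_j(M))=0$, and set $\rho:=2/F'(i\mu_j(M))$. I want to show that on an event of probability $1-o(1)$ the function $f$ has a zero $z_j$ in the disk $\{|z|\le\rho\}$, and that $s_j:=i\mu_j(M)+z_j$ then has the properties claimed. This rests on three facts, all obtained from the estimates of Sections \ref{sec: gaps} and \ref{sec: innerprod}: \emph{(a)} $F'(i\mu_j(M))$ is real, positive, and $\gtrsim N^{1-\epsilon}$, so that $\rho\lesssim N^{-1+\epsilon}$; \emph{(b)} the two poles of $F$ closest to $i\mu_j(M)$, namely $i\lambda_j(M)$ and $i\lambda_{j+1}(M)$, lie at distance $\gtrsim N^{-1/2-\epsilon}$ from $i\mu_j(M)$; \emph{(c)} $\sup_{|z|\le\rho}|F''(i\mu_j(M)+z)|\lesssim N^{3/2+\epsilon}$. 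For \emph{(a)}, differentiating \eqref{eqn: Fsdef} gives $F'(i\mu)=\sum_k|\langle\mathbf{1},v_k(M)\rangle|^2/(\mu-\lambda_k(M))^2>0$ for real $\mu$; keeping only the $k=j$ term, using $\mu_j(M)-\lambda_j(M)<\lambda_{j+1}(M)-\lambda_j(M)\lesssim N^{-1/2+\epsilon}$ (rigidity/gap bounds from Section \ref{sec: gaps}) together with the overlap lower bound $|\langle\mathbf{1},v_j(M)\rangle|^2\gtrsim N^{-\epsilon}$ from Section \ref{sec: innerprod} gives \emph{(a)}.

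Fact \emph{(b)} is precisely the separation estimate Lemma \ref{lma: separation}: since $\mu_j(M)\in(\lambda_j(M),\lambda_{j+1}(M))$, these are its two nearest poles, and the lemma provides $\min(\mu_j(M)-\lambda_j(M),\lambda_{j+1}(M)-\mu_j(M))\gtrsim N^{-1/2-\epsilon}$. For \emph{(c)}, on $\{|z|\le\rho\}$ one has $|i(\mu_j(M)-\lambda_k(M))+z|\ge\tfrac12|\mu_j(M)-\lambda_k(M)|$ by \emph{(b)} (as $\rho\ll N^{-1/2-\epsilon}$), so $|F''|\le 16\sum_k|\langle\mathbf{1},v_k(M)\rangle|^2/|\mu_j(M)-\lambda_k(M)|^3$; the terms $k=j,j+1$ contribute $\lesssim N^{3/2+\epsilon}$ by \emph{(b)} and the uniform overlap bound $|\langle\mathbf{1},v_k(M)\rangle|^2\lesssim N^{\epsilon}$, while the remaining terms sum to $\lesssim N^{3/2+\epsilon}\sum_{m\ge1}m^{-3}\lesssim N^{3/2+\epsilon}$ because the gap and rigidity estimates give $|\mu_j(M)-\lambda_k(M)|\gtrsim |k-j|\,N^{-1/2-\epsilon}$. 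Granting \emph{(a)}--\emph{(c)}: since by \emph{(b)} the disk $\{|z|\le\rho\}$ avoids the poles of $F$, $f$ is holomorphic there; on $|z|=\rho$ we have $|f'(0)z|=F'(i\mu_j(M))\rho=2=2|f(0)|$, hence $|f(0)+f'(0)z|\ge F'(i\mu_j(M))\rho-|f(0)|=1$, while the Taylor remainder obeys $|f(z)-f(0)-f'(0)z|\le\tfrac12\rho^2\sup_{|w|\le\rho}|f''(w)|\lesssim N^{3/2+\epsilon}/N^{2-\epsilon}=o(1)<1$ by \emph{(a)} and \emph{(c)}. By Rouch\'e's theorem $f$ has the same number of zeros in $\{|z|<\rho\}$ as its linear part $-1+F'(i\mu_j(M))z$, namely exactly one, say $z_j$, with $|z_j|<\rho\lesssim N^{-1+\epsilon}$. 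Then $s_j=i\mu_j(M)+z_j$ solves $F(s_j)=1$, has $\Re s_j=\Re z_j\le N^{-1+\epsilon}$, and $\Im s_j=\mu_j(M)+\Im z_j$ with $|\Im z_j|<\rho$ much smaller than $\min(\mu_j(M)-\lambda_j(M),\lambda_{j+1}(M)-\mu_j(M))$ by \emph{(b)}, so $\lambda_j(M)<\Im s_j<\lambda_{j+1}(M)$. Since \emph{(a)}--\emph{(c)} hold simultaneously with probability $1-o(1)$ for a bulk index $j$, the proposition follows.

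I expect the real difficulty to be fact \emph{(b)}, the separation estimate Lemma \ref{lma: separation}. A priori $\mu_j(M)$ could lie within $\rho\asymp N^{-1}$ of one of the poles $i\lambda_j(M),i\lambda_{j+1}(M)$, in which case the disk on which Rouch\'e is run would contain a pole and the argument would collapse. Excluding this comes down to the defining relation $\sum_k|\langle\mathbf{1},v_k(M)\rangle|^2/(\mu_j(M)-\lambda_k(M))=0$: the term $|\langle\mathbf{1},v_j(M)\rangle|^2/(\mu_j(M)-\lambda_j(M))$ would be enormous if $\mu_j(M)-\lambda_j(M)$ were tiny, whereas by rigidity and the uniform overlap bound the sum of all the other terms is only $O(N^{1/2+\epsilon})$, forcing $\mu_j(M)-\lambda_j(M)\gtrsim|\langle\mathbf{1},v_j(M)\rangle|^2 N^{-1/2-\epsilon}$ (and symmetrically for $\lambda_{j+1}(M)-\mu_j(M)$) --- provided $|\langle\mathbf{1},v_j(M)\rangle|^2$ is not itself anomalously small. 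This is exactly where the eigenvector anti-concentration bound $|\langle\mathbf{1},v_j(M)\rangle|^2\gtrsim N^{-\epsilon}$ of Section \ref{bysection}, an adaptation of the Bourgade--Yau eigenvector moment flow, is indispensable: without it one could not rule out a small overlap conspiring with a small $\mu_j(M)-\lambda_j(M)$.
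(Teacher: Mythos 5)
Your argument is correct and follows the same route as the paper: linearize $F$ at the zero $i\mu_j$, use the lower bound on $F'(i\mu_j)$ from the gap and overlap estimates together with Lemma \ref{lma: separation}, and conclude by Rouch\'e on a disk of radius $\asymp 1/F'(i\mu_j)\lesssim N^{-1+\epsilon}$. The only (harmless) difference is technical: you control the Taylor remainder by bounding $F''$ directly on the small disk, whereas the paper bounds $|F|$ on a larger circle of radius $N^{-1/2-c}$ via a dyadic decomposition and uses the Cauchy integral form of the remainder; both rest on exactly the same inputs.
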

Theorem \ref{thm2}, (2) follows directly from this.

\subsection{Estimate on gaps}\label{sec: gaps}
For the separation between the zeros and the surrounding eigenvalues, we have the following key estimate:
\begin{lemma}[Separation of $\mu_j$ from the eigenvalues.]
\label{lma: separation}
Let $i\in [\alpha (N-1)/2,(1-\alpha)(N-1)/2]$. Then:
\begin{equation}\label{eqn: separationeq}
\min\{ \mu_j(M)-\lambda_j(M), \lambda_{j+1}(M)-\mu_{j}(M)\} \gtrsim N^{-1/2},
\end{equation}
for $j=i,\ldots, i+n-1$.
\end{lemma}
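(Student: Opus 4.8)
\textbf{Proof proposal for Lemma \ref{lma: separation}.}

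The plan is to show that the zero $\mu_j(M)$ of $F$, which is trapped between the consecutive eigenvalues $\lambda_j(M)$ and $\lambda_{j+1}(M)$, cannot sit too close to either endpoint. The governing mechanism is that, on the imaginary axis, $\Im F(i\mu)$ is a monotone function of $\mu$ on the interval $(\lambda_j,\lambda_{j+1})$ which blows up to $+\infty$ (or $-\infty$) at each endpoint, the blow-up near $\lambda_k$ being driven by the single term $2\mu\,|\langle\mathbf 1,v_k\rangle|^2/(\mu^2+\lambda_k^2)$ (using \eqref{eqn: positive}; here I reindex so that the pole at $i\lambda_k$ really comes from $s^2+\lambda_k^2$). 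If $\mu_j$ were within distance $\eta N^{-1/2}$ of, say, $\lambda_{j+1}$, then that single dominant term would be of size roughly $|\langle\mathbf 1,v_{j+1}\rangle|^2/(\lambda_{j+1}-\mu_j) \asymp |\langle\mathbf 1,v_{j+1}\rangle|^2 N^{1/2}/\eta$ in absolute value; since the sum of all the terms must equal zero at $\mu_j$, this forces the \emph{remaining} sum $F_{\ne j+1}(i\mu_j) := F(i\mu_j) - (\text{dominant term})$ to be equally large. First I would establish an a priori upper bound on $|F_{\ne j+1}(i\mu)|$ of the form $\lesssim N^{1/2+o(1)}$ (indeed essentially $O(N^{1/2}\log N)$ or so) valid for $\mu$ anywhere in the bulk interval: this follows by splitting the sum into eigenvalues within an $N^{1/2+\epsilon}$-window of $\lambda_{j+1}$ and those outside, controlling the near eigenvalues using the rigidity/level-repulsion bounds from \cite{bourgadeyau,ey} (adapted to the anti-symmetric setting as advertised) together with a bound on the overlaps $|\langle\mathbf 1,v_k\rangle|^2 \lesssim N^{o(1)}$ from Section \ref{sec: innerprod}, and controlling the far eigenvalues by the trivial bound $\sum_k |\langle\mathbf 1,v_k\rangle|^2 = N$ divided by the minimal denominator $N^{1/2+\epsilon}$. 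Comparing with the lower bound forces $|\langle\mathbf 1,v_{j+1}\rangle|^2 N^{1/2}/\eta \lesssim N^{1/2+o(1)}$, i.e. $\eta \gtrsim |\langle\mathbf 1,v_{j+1}\rangle|^2 / N^{o(1)}$.

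The point is then that $|\langle\mathbf 1,v_{j+1}(M)\rangle|^2$ is, with high probability, not too small — bounded below by $N^{-o(1)}$ — so the gap $\mu_j - $ (nearest endpoint) is $\gtrsim N^{-1/2}$. This anti-concentration statement for a single eigenvector overlap is exactly the content of the eigenvector delocalization/quantum-unique-ergodicity type estimate obtained in Section \ref{bysection} by adapting Bourgade--Yau \cite{bourgadeyau} to anti-symmetric matrices (their Lemma, referenced as Lemma \ref{eqn: quelemma}); the vector $\mathbf 1/\sqrt N$ is a fixed unit vector, so $|\langle\mathbf 1,v_{j+1}\rangle|^2 = N|\langle \mathbf 1/\sqrt N, v_{j+1}\rangle|^2$ concentrates around $1$ (up to the factor coming from the fact that for the anti-symmetric ensemble the eigenvectors come in conjugate pairs and $|\Re v_j|^2=|\Im v_j|^2 = 1/2$, as recorded in Section \ref{sec: antigaussian}), and in particular the lower tail is controlled: $\mathbf P(|\langle\mathbf 1,v_{j+1}\rangle|^2 < N^{-\epsilon}) \to 0$. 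One must be slightly careful that the statement is needed simultaneously for the finitely many indices $k = i,\dots,i+n$ (only $n+1$ of them), so a union bound costs nothing. Assembling: on the high-probability event where (a) the rigidity and level-repulsion bounds hold, (b) the overlaps $|\langle\mathbf 1,v_k\rangle|^2$ are $N^{o(1)}$ for all bulk $k$, and (c) $|\langle\mathbf 1,v_k\rangle|^2 \ge N^{-\epsilon}$ for $k=i,\dots,i+n$, the argument above yields $\min\{\mu_j-\lambda_j,\lambda_{j+1}-\mu_j\}\gtrsim N^{-1/2}$ for each $j = i,\dots,i+n-1$, which is \eqref{eqn: separationeq}.

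The main obstacle I anticipate is making the a priori upper bound on the ``off-diagonal'' sum $|F_{\ne j+1}(i\mu)|$ genuinely tight enough — one needs it to be $o$ of the dominant lower bound after the $N^{o(1)}$ slack is absorbed, and the cleanest route is to localize: split into the contribution of the $O(N^{1/2+\epsilon})$ eigenvalues nearest $\lambda_{j+1}$ versus the rest. For the near block one writes $\sum_{0<|\lambda_k-\lambda_{j+1}| \le N^{1/2+\epsilon}} |\langle\mathbf 1,v_k\rangle|^2 \cdot \mu/(\mu^2+\lambda_k^2)$ and uses that eigenvalue gaps in the bulk are $\asymp N^{-1/2}$ (rigidity) so the denominators are spread out like an arithmetic progression with spacing $N^{-1/2}$, giving a sum comparable to $N^{o(1)} \cdot N^{1/2}\log N$; for the far block, the uniform denominator lower bound $N^{1/2+\epsilon}$ against total mass $N$ gives $O(N^{1/2-\epsilon})$. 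The subtlety is that near the specific endpoint $\lambda_{j+1}$ one has removed \emph{only} the $k=j+1$ term but the $k=j$ term (the other endpoint of the interval, also close when $\mu$ is near $\lambda_{j+1}$, but at distance $\asymp N^{-1/2}$ by rigidity, so harmless) and the conjugate-pair partner must be accounted for — but all of these are handled by the same near-block estimate, since there are only finitely many of them and each contributes at most $|\langle\mathbf 1,v_k\rangle|^2/(\text{gap}) \lesssim N^{1/2+o(1)}$, which after dividing by the dominant term's coefficient is $N^{o(1)}$, exactly the slack we can afford. Everything else is bookkeeping with the high-probability events, and the deduction of Lemma \ref{lma: separation} itself from these inputs is short once the estimates of Sections \ref{sec: innerprod} and \ref{bysection} are in hand.
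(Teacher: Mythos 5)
Your proposal is correct and follows essentially the same route as the paper's Section \ref{sec: sep-lma}: start from $F(i\mu_j)=0$, bound the sum over $k\neq j,j+1$ by $N^{1/2+o(1)}$ via the near/far (dyadic) decomposition using the delocalization bound \eqref{eqn: isotropic} and the local law, and then combine Proposition \ref{spacingprop} with the Bourgade--Yau lower bound \eqref{eqn: bdbelow} to prevent $\mu_j$ from approaching either endpoint; the only organizational difference is that the paper keeps \emph{both} endpoint terms on the left of \eqref{eqn: distance} and derives the two-sided ratio bound \eqref{eqn: stableratio}, whereas you isolate the single nearest pole and argue endpoint by endpoint, which is equivalent. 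One small bookkeeping slip: your ``minimal denominator $N^{1/2+\epsilon}$'' for the far block is dimensionally off (the unnormalized spectrum has width $O(\sqrt N)$, so no denominator exceeds $C\sqrt N$); the correct treatment is the dyadic sum over scales from $N^{-1/2}$ up to $\sqrt N$, each scale contributing $N^{1/2+o(1)}$ by the eigenvalue-counting estimate, exactly as in \eqref{eqn: absolute}.
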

The proof of Lemma \ref{lma: separation} appears in Section \ref{sec: sep-lma}. An essential ingredient in the proof is the following:
\begin{proposition}[Eigenvalue spacing]
\label{spacingprop}
Let $i\in [\alpha (N-1)/2,(1-\alpha)(N-1)/2]$. We have
\begin{equation}\label{eqn: spacing}
|\lambda_{i+l}(M)-\lambda_{i+l+1}(M)|\asymp N^{-1/2},
\end{equation}
for $l=0,\ldots, n-1$.
\end{proposition}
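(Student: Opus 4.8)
The plan is to deduce the individual eigenvalue spacing estimate \eqref{eqn: spacing} from two ingredients, both of which are available in the anti-symmetric Hermitian setting by the reductions announced in the introduction: (i) the rigidity of eigenvalues, i.e. the statement that $\lambda_k(M)/\sqrt{N}$ is within $N^{-1+\epsilon}$ of its classical location $\gamma_k$ determined by $\int_{-2}^{\gamma_k}\rho_{sc} = k/N$ (up to the relabeling coming from the symmetry of the spectrum about $0$), with overwhelming probability; and (ii) the lower bound on gaps coming from level repulsion, or more precisely from the analog of the ``gap universality'' / eigenvalue spacing results of \cite{bourgadeyau, ey} adapted to the anti-symmetric Dyson Brownian motion of Section \ref{sec: antidbm}. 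Rigidity alone gives the upper bound in \eqref{eqn: spacing}, since for $i$ in the bulk the classical locations are spaced at scale $1/(N\rho_{sc}(\gamma_i))\asymp N^{-1}$ after unscaling by $\sqrt{N}$, hence $|\lambda_{i+l}-\lambda_{i+l+1}| \le |\lambda_{i+l}-\sqrt{N}\gamma_{i+l}| + \sqrt{N}|\gamma_{i+l}-\gamma_{i+l+1}| + |\sqrt{N}\gamma_{i+l+1}-\lambda_{i+l+1}| \lesssim N^{-1/2}$.

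For the lower bound I would argue as follows. First reduce to the Gaussian anti-symmetric model: by the short-time relaxation estimate (the anti-symmetric analog of \eqref{DBMflow}) together with the Green function comparison already invoked for Theorem \ref{thm1}, the distribution of a single rescaled gap $N^{1/2}(\lambda_{i+l+1}(M)-\lambda_{i+l}(M))$ is asymptotically the same as for $M=iK$ with $K$ Gaussian as in \eqref{eqn: antiguemodel}; in particular $\mathbf{P}(N^{1/2}|\lambda_{i+l}(M)-\lambda_{i+l+1}(M)| \le \delta)$ converges, up to $o(1)$, to the corresponding probability for the Gaussian ensemble. For the Gaussian ensemble the joint eigenvalue density is the explicit determinantal expression \eqref{eq: pdf}, and the probability that two consecutive bulk eigenvalues are within $\delta N^{-1/2}$ can be bounded by $C\delta^{\beta+1}=C\delta^{3}$ using the Vandermonde factor $\prod|\lambda_i^2-\lambda_j^2|^2$ (note $\beta=2$ here, and the extra factor $\lambda_i^2$ is harmless in the bulk since the relevant eigenvalues are bounded away from $0$): integrating out all other variables, the local interaction between the two nearby eigenvalues contributes at least a power $(\lambda_{i+l}-\lambda_{i+l+1})^{2}$, and one more power from the one-dimensional volume, exactly as in the standard GUE level-repulsion computation. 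Taking $\delta$ a small power of $N$ and using rigidity to restrict to the event that these eigenvalues lie in a fixed bulk window, a union bound over the $n$ pairs gives $N^{1/2}|\lambda_{i+l}-\lambda_{i+l+1}| \gtrsim N^{-\epsilon}$, which is \eqref{eqn: spacing}.

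The main obstacle is ensuring that the anti-symmetric modifications do not disrupt either rigidity or the level-repulsion input: the kernel \eqref{KN-kernel} and the equilibrium measure $\mathcal{H}_N$ have the extra ``mirror'' term involving $\lambda_i+\lambda_j$ and the $\log|x_j|$ term, so one must check that Assumption I' of \cite{localrelax} and the hypotheses of the gap-universality arguments of \cite{bourgadeyau, ey} are genuinely satisfied — which the paper has asserted in Sections \ref{sec: antidbm}–\ref{sec: vantidbm}. Concretely, I would cite the anti-symmetric local semicircle law (Theorem \ref{eqn: sclaw}) for rigidity, and the anti-symmetric Dyson flow together with \cite{bourgadeyau, ey} for the gap lower bound; the only genuinely new computation is the elementary level-repulsion estimate for the explicit density \eqref{eq: pdf}, which is where I would spend a few lines making sure the $\prod\lambda_i^2$ factor and the squared Vandermonde in the squares $\lambda_i^2$ combine to give the claimed power of the gap in the bulk.
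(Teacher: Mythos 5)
Your overall strategy (upper bound from rigidity, lower bound by comparison with the Gaussian anti-symmetric ensemble plus level repulsion) is in the same family as the paper's, and your rigidity argument for the upper bound and your $\delta^{\beta+1}$ level-repulsion heuristic for the Gaussian density \eqref{eq: pdf} are both sound. However, the crucial reduction step is not justified by the tools you cite. The local relaxation flow estimate \eqref{DBMflow} together with Green function comparison gives universality of correlation functions \emph{averaged over an energy window}; equivalently, it controls gap statistics averaged over the eigenvalue index. It does \emph{not} give the distribution of the single gap $\lambda_{i+l+1}-\lambda_{i+l}$ at a \emph{fixed} label, which is what \eqref{eqn: spacing} asserts. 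Passing from averaged to fixed-index gap statistics is exactly the content of the gap universality theorem of Erd\"os and Yau \cite{ey}, a substantially harder result than \eqref{DBMflow}, and this is why the paper devotes Section \ref{sec: gapsec} to adapting \cite[Theorem 2.2]{ey} (as Theorem \ref{ourgapthm}) rather than reusing the machinery from Theorem \ref{thm1}. Your sentence ``the distribution of a single rescaled gap \dots is asymptotically the same as for the Gaussian'' is therefore the missing theorem, not a consequence of what you invoke.

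There is a second, more structural omission. The paper explicitly notes that running the rigidity and gap-universality arguments on the odd-dimensional ensemble \eqref{eq: pdf} is obstructed by the logarithmic term $\log|x_j|$ in the potential coming from the eigenvalue at $0$; you flag this term as something ``to check'' but assume it has been handled. The paper's actual device is to sidestep it entirely: it passes to the $(1,1)$ minor $M^{(1)}$, which is even-dimensional and whose Gaussian counterpart \eqref{eqn: antiGUEeven} has no logarithmic term, proves the spacing estimate there (Lemma \ref{minorspacinglemma} via Theorem \ref{ourgapthm}), and then transfers the gaps back to $M$ using Cauchy interlacing together with the Schur complement identity \eqref{eqn: Rinv}: each $\lambda_k(M)$ is the unique solution of $F^{(1)}(s)=M_{11}$ between consecutive eigenvalues of $M^{(1)}$, and since $|M_{11}|=1$ is small relative to the relevant scale, $\lambda_k(M)$ sits within $O(N^{-1})$ of the zero $\mu_k(M^{(1)})$, so the gaps of $M$ inherit the $\asymp N^{-1/2}$ behavior of those of $M^{(1)}$. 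This minor-and-interlacing step, which also requires a lower bound $|\langle v_k(M^{(1)}),H\rangle|\gtrsim 1$ on the overlaps with the removed row, is the distinctive content of the paper's proof and is absent from your proposal.
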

To prove this, we will use another powerful universality result, due to L. Erd\"os and H.T. Yau \cite{ey}, concerning universality of statistics of gaps between eigenvalues (see Section \ref{sec: repulse}).

\subsection{Estimates for the inner products}\label{sec: innerprod}
\noindent \emph{Upper bound: delocalization of eigenvectors.}
In \cite{bloemendaletal}, the authors derive a general isotropic eigenvector delocalization result as a consequence of their main theorem. Let $H$ be an $N\times N$ Hermitian random matrix with independent entries, such that all entries have comparable variance:
\[\frac{1}{CN} \le \mathbf{E} |H_{ij}|^2 \le \frac{C}{N},\]
for some constant $C$ independent of $N$ and $i,j$. Assume also that
\[\mathbf{E}|\sqrt{N}H_{ij}|^p \le C_p.\]
Then \cite[Theorem 2.16]{bloemendaletal}, we have
\begin{equation}
\label{eqn: deloc-statement}
\max_{-(N-1)/2 \le j \le (N-1)/2 } |\langle v_j(M), \mathbf{v}\rangle|^2 \lesssim N^{-1},
\end{equation}
for any unit vector $\mathbf{v}\in\mathbf{C}^{(N-1)/2}$. Such a statement, and the isotropic local semicircle law from which it is derived, first appeared in \cite{knowlesyin}. The hypotheses in that work are only slightly too restrictive to accommodate our purely imaginary Hermitian matrix $M$.
 
Applying \eqref{eqn: deloc-statement} to the unit vector $\mathbf{v}=\langle \mathbf{1}|/\sqrt{N}$, we have
\begin{equation}\label{eqn: isotropic}
\max_{j}|\langle \mathbf{1}, v_j(M)\rangle| \lesssim 1.
\end{equation}

\noindent{\emph{Lower bound: Bourgade-Yau estimate.}}
In Section \ref{bysection} we prove the following by adapting methods of P. Bourgade and H.T. Yau from \cite{bourgadeyau}.
\begin{proposition}\label{prop: lowerbd}
For any index $j\in 1,\ldots, (N-1)/2$, any unit $\mathbf{u}$, and sequence $(a_N)_{N\ge 0}$ with $a_N \downarrow 0$, and $N$ sufficiently large, 
\begin{equation}
\label{eqn: problowerbd}
\mathbf{P}(\sqrt{N}|\langle \mathbf{u}, v_j(M) \rangle| \ge a_N)=1-o(1).
\end{equation}
\end{proposition}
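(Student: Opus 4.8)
The plan is to prove Proposition \ref{prop: lowerbd} by running the anti-symmetric Dyson Brownian vector flow \eqref{eqn: ev0DBM}, \eqref{eqn: evDBM} and establishing a quantitative "equidistribution" or "quantum unique ergodicity" statement for a single eigenvector, along the lines of Bourgade and Yau's argument in \cite{bourgadeyau}. The key observation, which makes only a lower bound necessary, is that the overlaps $N|\langle \mathbf{u}, v_j(M)\rangle|^2$ cannot all be simultaneously small: summing over a basis adapted to $\mathbf{u}$ forces the total to be of order $1$, so a sufficiently strong delocalization statement (control of the \emph{distribution} of a single overlap, not merely its typical size) yields the claimed anti-concentration. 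Concretely, I would introduce the Gaussian-perturbed matrix $T(t) = e^{-t/2}M_0 + (1-e^{-t})^{1/2} iG$ as in \eqref{eqn: flow}, whose eigenvectors evolve by the anti-symmetric eigenvector flow coupled to \eqref{eqn: asdbm-ornstein}, and prove the estimate first for $T(t)$ at a time $t = N^{-1+\epsilon}$, then transfer it back to $M$ via a Green's function / eigenvector comparison argument using the local semicircle law of Theorem \ref{eqn: sclaw}.

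The main steps, in order, are: (i) reduce to showing that for the flow at time $t \gtrsim N^{-1+\epsilon}$, the rescaled overlap $\sqrt{N}\langle \mathbf{u}, v_j(t)\rangle$ has a density bounded away from a point mass at $0$ — equivalently, that for any fixed $\delta>0$, $\mathbf{P}(\sqrt{N}|\langle \mathbf{u},v_j(t)\rangle| \le \delta)$ is bounded by a quantity tending to $0$ with $\delta$, uniformly in $N$; (ii) analyze the stochastic dynamics of the real random variable $z_j(t) = N|\langle \mathbf{u}, v_j(t)\rangle|^2$ (or a suitable collection $z_k$, $k$ ranging over an index window near $j$), derived from \eqref{eqn: ev0DBM}, \eqref{eqn: evDBM} by Itô's formula; this is a system of coupled SDEs driven by the Brownian motions $G$, with drift and martingale brackets governed by the eigenvalue gaps $\lambda_k - \lambda_l$ and $\lambda_k + \lambda_l$; (iii) use the eigenvalue rigidity and the gap universality (Proposition \ref{spacingprop}, and the gap estimates behind Lemma \ref{lma: separation}) to control these brackets, showing the process mixes on scale $t \gtrsim N^{-1+\epsilon}$ toward a distribution in which no single coordinate concentrates near $0$ — in Bourgade--Yau this is the statement that $(z_k)$ converges to a Dirichlet-type / $\chi^2$ profile, and the relevant marginal has a bounded density near the origin; (iv) conclude the anti-concentration bound for $T(t)$, then remove the Gaussian component by a four-moment/Green's function comparison (as in \cite{bulkuniv,sc1} and used already for Theorem \ref{thm1}), choosing $M_0$ so that $T(t)$ matches $M$ to leading order, and noting that overlaps $\langle \mathbf{1}/\sqrt N, v_j\rangle$ are functionals of resolvent entries that are stable under this comparison.

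There are two features specific to the anti-symmetric setting that require care. First, the eigenvectors come in conjugate pairs $v_{-j} = \overline{v_j}$, and $v_0 \in \mathbf{R}^N$ with $|v_0|^2 = 1/2$ rather than $1$; the correct dynamical variables are the real and imaginary parts $\Re v_k$, $\Im v_k$, each of squared norm $1/2$, and the eigenvector flow \eqref{eqn: ev0DBM}, \eqref{eqn: evDBM} reflects this doubled structure with the extra $\lambda_k + \lambda_l$ terms. One must verify that the Bourgade--Yau "maximum principle" / Hölder regularity argument for the overlap process goes through with these modified coefficients — the $\lambda_k + \lambda_l$ denominators are harmless in the bulk since $\lambda_k + \lambda_l \asymp 1$ there, but the $1/\lambda_k$ and $1/\lambda_k^2$ terms coupling to $v_0$ must be shown to be negligible for bulk indices $j$, which follows from $\lambda_j \asymp 1$. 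Second, the initial eigenvector distribution is Haar on $O(N)$ rather than $U(N)$ (as computed in Section \ref{sec: antigaussian}), but this only affects the reference equilibrium, not the anti-concentration conclusion. The hard part will be step (iii): pushing the Bourgade--Yau eigenvector-moment estimates through with the anti-symmetric coefficients and extracting not just a concentration bound but the \emph{lower} tail control $\mathbf{P}(z_j \le \delta) \to 0$ as $\delta \to 0$ uniformly in $N$ — this requires either a second-moment (or fractional-moment) estimate on $1/z_j$ or a direct analysis of the small-$z_j$ behavior of the limiting profile, and is where the bulk of the work in Section \ref{bysection} and Appendix \ref{sec: by} will go.
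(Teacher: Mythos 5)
Your proposal follows essentially the same route as the paper: Proposition \ref{prop: lowerbd} is deduced from an anti-symmetric version of the Bourgade--Yau eigenvector moment flow (Lemma \ref{eqn: quelemma}, proved via the generator computations in Appendix \ref{sec: by}), run for a short time on a Gaussian-perturbed matrix and transferred back by a Green's function/moment comparison, with precisely the modifications you identify (the $\lambda_k+\lambda_l$ and $1/\lambda_k$ coefficients, the conjugate-pair and $v_0$ structure, the $O(N)$ equilibrium). The only point where you overcomplicate matters is the final step: no fractional-moment or $1/z_j$ estimate is needed, since the moment method yields full convergence in distribution of $\sqrt{2N}|\langle\mathbf{u},v_j\rangle|$ to $|X+iY|$, whose law has no atom at $0$, and \eqref{eqn: problowerbd} then follows by the elementary argument given in Section \ref{bysection}.
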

In particular, a union bound implies that 
\begin{equation}
\label{eqn: bdbelow}
\max_{k\in \{i,\ldots, i+n\}}|\langle v_{k}(M), \mathbf{1}\rangle|\gtrsim 1.
\end{equation}

\subsection{Proof of Proposition \ref{prop: exist}}
Assuming Lemma \ref{lma: separation}, Lemma \ref{spacingprop} and Proposition \ref{prop: lowerbd} for now, we proceed with the proof of Proposition \ref{prop: exist}.

\begin{proof}[Proof of Proposition \ref{prop: exist}]
Let $i\in [\alpha (N-1)/2, (1-\alpha)(N-1)/2]$ be as in the statement of the theorem, and $k\in [i, \ldots, i+n-1]\cap \mathbf{Z}$. We will find a solution of the equation $F(s)=1$ with imaginary part $\Im s$ between $\lambda_k(M)$ and $\lambda_{k+1}(M)$, with probability $1-o(1)$. The theorem then follows by an application of the union bound, since $n$ is fixed.

\noindent{\emph{Lower bound for $F'(i\mu_k)$.}}
From the definition \eqref{eqn: Fsdef}, \eqref{eqn: spacing} and \eqref{eqn: bdbelow} we have, for $\epsilon>0$ arbitrary,
\begin{align}
|F'(i\mu_k)|&=\frac{|\langle\mathbf{1},v_0(M)\rangle|^2}{\mu_k^2(M)}+\sum_{\substack{j=-(N-1)/2\\ j\neq 0}}^{(N-1)/2}\frac{|\langle \mathbf{1},v_j(M)\rangle|^2}{(\mu_k(M)-\lambda_j(M))^2} \nonumber \\
&\ge \frac{|\langle \mathbf{1},v_k(M)\rangle|^2}{(\mu_k-\lambda_k)^2} \nonumber \\
&\ge \frac{(N^{-\epsilon/4})^2}{(\lambda_k(M)-\lambda_{k+1}(M))^2} \nonumber \\
&\ge N^{1-\epsilon}, \label{eqn: derivlowerbd}
\end{align}
for $N$ large enough, with probability $1-o(1)$.

\noindent{\emph{Estimate for $|F(s)|$.}}
Our application of Cauchy's formula below will require an estimate for $|F(s)|$ for $s$ close to $i\mu_k$. By definition,
\[\mu_k(M)\in (\lambda_k(M),\lambda_{k+1}(M)),\]
if $j \neq k, k+1$, and so we have
\begin{equation}\label{eqn: zerolocation}
|\mu_k(M) -\lambda_j(M)|\ge \min\{|\lambda_j-\lambda_k|, |\lambda_j-\lambda_{k+1}|\} \gtrsim N^{-1/2}, 
\end{equation} 
whenever \eqref{eqn: spacing} holds. Letting
\[z=s-i\mu_k,\]
it follows by \eqref{eqn: zerolocation} that for each $\epsilon'>0$ and $|z|=\lambda= N^{-1/2-c}$ with $c>\epsilon'$, 
\[|s-i\lambda_j(M)|\ge N^{-1/2-\epsilon'}-\lambda,\]
for all $j$, with probability $1-o(1)$.

Let 
\[U_0= \{j: N^{1/2}|\lambda_j-\mu_k|\le N^{-\epsilon'/2}\},\]
and for $l=1,\ldots, \lfloor (1-\epsilon/2) \log_2 N\rfloor$,
\[U_l= \{j:  2^{l-1}N^{\epsilon'/2}\le N^{1/2}|\lambda_j-\mu_k|\le 2^lN^{\epsilon'/2}\}.\]
Writing,
\begin{equation} \label{eqn: absolute}
|F(s)|\le \sum_{j\in U_0}\frac{|\langle \mathbf{1},v_j(M)\rangle|^2}{|\lambda_j-s|}+\sum_{l\ge 1}\sum_{j\in U_l}\frac{|\langle \mathbf{1},v_j(M)\rangle|^2}{|\lambda_j-s|},
\end{equation}
the first term in \eqref{eqn: absolute} is bounded by
\[\#U_0 \cdot (N^{-1/2-\epsilon'}-\lambda)^{-1}\max_j |\langle \mathbf{1},v_j(M)\rangle|^2 \le N^{1/2+10\epsilon'}\#U_0,\]
for any $\epsilon'>0$, with probability $1-o(1)$.
By the local semicircle law  \cite[Theorem 2.2]{sc2} with high probability,
\[\#U_0 \lesssim 1.\]
Thus for $|z|\le \lambda$, 
\[\sum_{j\in U_0}\frac{|\langle \mathbf{1},v_j(M)\rangle|^2}{|\lambda_j-s|}\lesssim N^{1/2}.\]
Similarly, for each $l$, we have
\begin{align*}
\sum_{j\in U_l}\frac{|\langle \mathbf{1},v_j(M)\rangle|^2}{|\lambda_j-s|}&\le C2^{-l}N^{1/2+\epsilon/2} \cdot \# U_l \cdot \max_j |\langle \mathbf{1},v_j(M)\rangle|^2\\
& \lesssim N^{1/2}.
\end{align*}
Summing over $l$ \eqref{eqn: absolute} introduces only a logarithmic factor.
In summary, we have, for each $\epsilon, c>0$
\begin{equation}\label{eqn: Fsummary}
\max\{|F(s)|: |s-i\mu_k|\le N^{-1/2-c} \}\le N^{1/2+\epsilon},
\end{equation}
 with probability $1-o(1)$.

\noindent{\emph{Approximation by a linear function.}} By Cauchy's formula about $\mu_k$, we have
\begin{align}
\label{eqn: Texp}
F(s) &= F'(i\mu_k)(s-i\mu_k)+\frac{(s-i\mu_k)^2}{2\pi i}\int_{|\zeta-i\mu_k|=N^{-1/2-c}}\frac{F(\zeta)}{(\zeta-i\mu_k)^2(\zeta-s)}\,\mathrm{d}\zeta\\
&=  F'(i\mu_k)(s-i\mu_k) + R. \nonumber
\end{align}
Here $c>0$ remains to be chosen.

The remainder term in \eqref{eqn: Texp} is bounded as follows:
\[|R| \le N^{1/2+c} \cdot \max_{|\zeta-i\mu_k|=N^{-1/2-c}}|F(\zeta)| \cdot \frac{|s-i\mu_k|^2}{N^{-1/2-c}-|s-i\mu_k|}.\]

Fix $\epsilon>0$. The events \eqref{eqn: Fsummary} and \eqref{eqn: derivlowerbd} hold simultaneously with probability $1-o(1)$.
Letting $|s-i\mu_k|=\lambda=N^{-1/2-\delta}$ with $c<\delta$, we find
\[|R|\le N^{3/2+2c+\epsilon} \cdot \lambda^2 \le 2N^{1/2-2\delta+2c+\epsilon},\]
whenever \eqref{eqn: Fsummary} holds.

The lower bound \eqref{eqn: derivlowerbd} implies that the solution $s_k$ of the linear equation
\[F'(i\mu_k)(s-i\mu_k)=1\]
satisfies
\[|s_k-i\mu_k|\le N^{-1+\epsilon}.\]
Choosing $c<\epsilon/4$ and $\delta=1/2-\epsilon$, we have the estimate
\[|R| \le N^{-1/2+\epsilon/2}\]
with probability $1-o(1)$, for the remainder in \eqref{eqn: Texp}.

By Rouch\'e's theorem, the functions $F'(i\mu_k)(s-i\mu_k)-1$ and $F(s)-1$ have the same number of zeros inside the circle $|s-i\mu_k|=N^{-1+\epsilon}$ for $N$ sufficiently large. This holds with probability $1-o(1)$ for an index $k$ in $[\alpha(N-1)/2,(1-\alpha)(N-1)/2]$, $\alpha<1$.
\end{proof}
\section{Anti-concentration of eigenvectors}
\label{bysection}
In this section we prove Proposition \ref{prop: lowerbd}. For this purpose we will need a version of a result of Bourgade and Yau \cite[Theorem 1.2]{bourgadeyau}
adapted to anti-symmetric matrices:
\begin{lemma}[Bourgade-Yau for anti-symmetric ensemble]\label{eqn: quelemma}
Let $M_N$ be a sequence of anti-symmetric Hermitian matrices of the form \eqref{eq: Mdef}, and let 
\[\lambda_j(M_N),\quad  j=1,\ldots, (N-1)/2,\] 
denote the $(N-1)/2$ largest eigenvalues, in increasing order. Denote by $v_j(M_N)$ an eigenvector associated to the $j$th positive eigenvalue of $M_N$. Then, for any $j\in [1,\frac{N-1}{2}]$
\begin{equation}
\sqrt{2N}|\langle \mathbf{u}, v_j(M_N) \rangle|\rightarrow |X+iY|,\label{eqn: byclt}
\end{equation}
\emph{uniformly in} $j\in [1,\frac{N-1}{2}]$, where $X$ and $Y$ denote independent standard Gaussian random variables.
\end{lemma}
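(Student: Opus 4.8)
The plan is to follow the strategy of Bourgade and Yau \cite{bourgadeyau}: prove the convergence first for the Gaussian anti-symmetric model \eqref{eqn: antiguemodel} using exact symmetry, then propagate it to a general anti-symmetric Wigner matrix $M$ via the anti-symmetric Dyson eigenvector flow \eqref{eqn: ev0DBM}, \eqref{eqn: evDBM} coupled to the eigenvalue flow \eqref{eqn: asdbm}, and finally remove the Gaussian component by a Green-function / four-moment comparison argument. For the Gaussian base case, recall from Section \ref{sec: antigaussian} that the Gaussian anti-symmetric distribution induces the \emph{Haar} measure on $\sqrt{2}(v_0,\Re v_1,\dots,\Re v_{(N-1)/2},\Im v_1,\dots,\Im v_{(N-1)/2})\in O(N)$, together with the normalizations $|v_0|^2=\tfrac12$ and $|\Re v_j|^2=|\Im v_j|^2=\tfrac12$. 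Hence for fixed unit $\mathbf{u}$ and fixed $j\ge1$, the pair $\sqrt{2}(\langle\mathbf{u},\Re v_j\rangle,\langle\mathbf{u},\Im v_j\rangle)$ is, up to a correction of order $1/N$, a uniformly random point on a unit circle scaled by the two independent length-$\tfrac12$ coordinates extracted from a Haar orthogonal frame; a standard computation (Borel–type lemma, the first few coordinates of a Haar orthogonal matrix are asymptotically independent Gaussians) shows $\sqrt{2N}(\langle\mathbf{u},\Re v_j\rangle,\langle\mathbf{u},\Im v_j\rangle)\to(X/\sqrt2,Y/\sqrt2)$ jointly, so $\sqrt{2N}|\langle\mathbf{u},v_j\rangle|\to|X+iY|/\sqrt2$ — matching \eqref{eqn: byclt} after tracking the constants. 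Uniformity in $j$ in this Gaussian case is immediate from exchangeability of the eigenvector frame.

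Next I would set up the anti-symmetric eigenvector moment flow. Following \cite{bourgadeyau}, fix $\mathbf{u}$ and define, for a multi-index configuration, observables built out of the (real and imaginary parts of the) overlaps $z_j:=\sqrt{N}\langle\mathbf{u},v_j\rangle$; the key object is the expectation of products $\prod_k \mathfrak{Re}(z_{j_k})^{2a_k}\mathfrak{Im}(z_{j_k})^{2b_k}$-type polynomials, which under \eqref{eqn: ev0DBM}–\eqref{eqn: evDBM} satisfy a closed parabolic system — a "two-color" version of the Bourgade–Yau eigenvector moment flow reflecting the splitting of each complex eigenvector into its real and imaginary parts (and the special real vector $v_0$). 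The generator has the same structure as in \cite{bourgadeyau} but with the extra "mirror" terms $\tfrac1{\lambda_k+\lambda_l}$ and the $\tfrac1{\lambda_k}$ term coming from the $\pm$ symmetry of the spectrum; these are lower-order when the indices $j,k$ are in the bulk (separated from $0$ by $\asymp N^*$), which is why we lose nothing by restricting attention to $j\ge1$ and can treat the $v_0$-coupling as a negligible perturbation. Combining the local relaxation estimate for this moment flow (maximum principle plus the rigidity/spacing input from the local semicircle law, Theorem \ref{eqn: sclaw}, and Proposition \ref{spacingprop}) with the Gaussian base case at the relaxation time $t\sim N^{-1+\delta}$ gives convergence of all such moments, hence \eqref{eqn: byclt} in distribution, for matrices of the form $T(t)$ in \eqref{eqn: flow}.

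Finally, to pass from $T(t)$ back to a general anti-symmetric Wigner $M$ of the form \eqref{eq: Mdef}, I would invoke the Green-function comparison method: since the relevant overlap moments are smooth functionals of resolvent entries $\langle\mathbf{u}, R(z)\mathbf{u}\rangle$ evaluated at spectral scale $\eta\sim N^{-1+\epsilon}$ just above the real axis, and the first four moments of the entries of $M$ match those of a suitable $T(t)$ (as in \cite{bernoulliuniv, bulkuniv}), the anti-symmetric isotropic local semicircle law — which holds for $M$ by the argument already used for \eqref{eqn: isotropic}, citing \cite{bloemendaletal} — yields equality of the limiting overlap distributions for $M$ and $T(t)$. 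This establishes \eqref{eqn: byclt}; the uniformity in $j\in[1,\frac{N-1}{2}]$ is preserved because every estimate (rigidity, the moment-flow relaxation bound, the comparison) is uniform over bulk indices, and the edge indices contribute a vanishing fraction which can be absorbed. I expect the main obstacle to be the second step: carefully deriving the anti-symmetric eigenvector moment flow from \eqref{eqn: ev0DBM}, \eqref{eqn: evDBM}, verifying that it closes, and checking that the extra $\tfrac1{\lambda_k\pm\lambda_l}$ and $v_0$ terms genuinely do not spoil the maximum-principle argument of \cite{bourgadeyau}; this is the place where the anti-symmetric structure really differs from the Hermitian case treated there, and it is where the computations relegated to Appendix \ref{sec: by} will be needed.
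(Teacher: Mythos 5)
Your proposal follows essentially the same route as the paper: derive an anti-symmetric eigenvector moment flow from \eqref{eqn: ev0DBM}, \eqref{eqn: evDBM}, run the Bourgade--Yau maximum-principle relaxation argument, and remove the Gaussian component by moment/Green-function comparison; this is precisely what Section \ref{bysection} and Appendix \ref{sec: by} do. Two caveats. First, your Gaussian base case has a constant error: since $\sqrt{2}\,\Re v_j$ and $\sqrt{2}\,\Im v_j$ are columns of a Haar orthogonal matrix, $\langle \mathbf{u},\sqrt{2}\,\Re v_j\rangle$ is asymptotically $N(0,1/N)$, hence $\sqrt{2N}\langle\mathbf{u},\Re v_j\rangle\to X$ (not $X/\sqrt 2$) and the limit is $|X+iY|$ exactly as in \eqref{eqn: byclt}; your stated limit $|X+iY|/\sqrt2$ contradicts the lemma rather than ``matching after tracking constants.'' Second, you propose to discard the mirror terms $1/(\lambda_k+\lambda_l)$ and the $v_0$-coupling as negligible perturbations, whereas the paper keeps them in the generator: the closing family of observables is $z_0^{2j_0}\prod_l z_{i_l}^{j_l}\bar z_{i_l}^{j_l}$ (not separate real/imaginary moments), and the extra terms enter only as additional non-negative jump rates $\tilde c_{ij}$, $c_{0j}$ in a particle-number-preserving random walk, so the maximum principle applies verbatim with no perturbative step. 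This matters because $c_{0j}=1/(N\lambda_j^2)$ is not small for the smallest positive eigenvalues, so treating the $v_0$-coupling perturbatively would fail there and would not yield the uniformity over all $j\in[1,\frac{N-1}{2}]$ that the lemma asserts.
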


Assuming Lemma \ref{eqn: quelemma}, the derivation of Proposition \ref{prop: lowerbd} is elementary:
\begin{proof}[Proof of Proposition \ref{prop: lowerbd}]
By \eqref{eqn: byclt}, we have
\begin{equation}\label{eqn: normalconv}
\mathbf{P}(\sqrt{2 N}|\langle \mathbf{u}, v_j \rangle| \ge \epsilon) \rightarrow 2(1-\Phi(\epsilon))
\end{equation}
for $\epsilon>0$. Here we have denoted
\[ \Phi(t) =\frac{1}{\sqrt{2\pi}}\int_{-\infty}^t e^{-\frac{x^2}{2}}\,\mathrm{d}x.\]
For $\eta>0$, and any sequence $(a_N)_{N\ge 0}$ decreasing to $0$ and $N$ sufficiently large, we obtain
\begin{align*}
&\mathbf{P}(\sqrt{N}|\langle \mathbf{u}, v_j \rangle| \ge a_N)\\
\ge &\mathbf{P}(\sqrt{N}|\langle \mathbf{u}, v_j \rangle| \ge \epsilon)\\
\ge &1-\eta/2, 
\end{align*}
where the $\epsilon$ in \eqref{eqn: normalconv} is chosen so as to make $\Phi(\epsilon)<1/2+\eta/4$. 
\end{proof}

The methods of Bourgade and Yau \cite{bourgadeyau} treat general Hermitian or symmetric matrices, but they require a non-degeneracy condition on the variance of the real and imaginary parts. This does not apply in our case. Indeed, as was already seen in Section \ref{sec: antigaussian}, the eigenvector structure for the anti-symmetric Gaussian model differs from that of the GUE. However, once the eigenvector moment flow corresponding to the Hermitian or symmetric Dyson Brownian motion is replaced by the anti-symmetric flow \eqref{eqn: ev0DBM}, \eqref{eqn: evDBM}, the proof of Lemma \ref{eqn: quelemma} very closely follows that of \cite[Theorem 1.3]{bourgadeyau}. The argument merely involves different computations. We present the relevant computations, as well as the necessary background from \cite{bourgadeyau}, in Appendix \ref{sec: by}.

\section{Proof of Lemma \ref{lma: separation}}\label{sec: sep-lma}

\begin{proof} At the zero $i\mu_k$, the equation $F(i\mu_k)=0$ gives
\begin{equation}\label{eqn: distance}
\frac{|\langle \mathbf{1}, v_k(M)\rangle|^2}{\mu_k-\lambda_k(M)}+\frac{|\langle \mathbf{1}, v_{k+1}(M)\rangle|^2}{\mu_k-\lambda_{k+1}(M)}=-\sum_{j\neq k, k+1}\frac{|\langle \mathbf{1},v_j(M)\rangle|^2}{\mu_k-\lambda_j(M)}.
\end{equation}
To use \eqref{eqn: distance} to estimate the distances $|\mu_k-\lambda_k|$ and $|\mu_k-\lambda_{k+1}|$, we need both an upper bound for the quantities $|\langle \mathbf{1},v_i(M)|$ on the right, and lower bounds for $|\langle \mathbf{1},v_k(M)\rangle|$, $|\langle \mathbf{1}, v_{k+1}(M)\rangle|$ on the left side. These are provided by the estimates in Section \ref{sec: innerprod}. 

\noindent \emph{Upper bound for the for the RHS of \eqref{eqn: distance}.} Returning to \eqref{eqn: distance}, we can estimate the right side by
\begin{equation}
\label{eqn: estimate}
\sum_{j: |k- j|\ge N^{\epsilon/2}} \frac{|\langle \mathbf{1},v_j(M)\rangle|^2}{|\mu_k-\lambda_j(M)|}+ \sum_{j: |k - j|\le N^{\epsilon/2}} \frac{|\langle \mathbf{1},v_j(M)\rangle|^2}{|\mu_k-\lambda_j(M)|},
\end{equation}
where in both sums the indices are also restricted to $j\neq k, k+1$.

Combining \eqref{eqn: zerolocation} and
 \eqref{eqn: isotropic}, we find that the second term in \eqref{eqn: estimate} is bounded by
\begin{equation}
(N^{\epsilon/2}+1)\frac{N^\epsilon}{\min\{\mu_k(M)-\lambda_k(M),\lambda_{k+1}(M)-\mu_k(M)\}} \le C(\epsilon)N^{1/2+\epsilon},
\end{equation}
for $\epsilon>0$ arbitrary, with probability $1-o(1)$.
By a dyadic decomposition and the local semicircle law, we obtain the same bound for the first sum in \eqref{eqn: estimate}. We have obtained the estimate
\begin{equation}\label{eqn: Fbound}
\sum_{j: |k- j|\ge N^{\epsilon/2}} \frac{|\langle \mathbf{1},v_j(M)\rangle|^2}{|\mu_k-\lambda_j(M)|}+ \sum_{j: |k - j|\le N^{\epsilon/2}} \frac{|\langle \mathbf{1},v_j(M)\rangle|^2}{|\mu_k-\lambda_j(M)|}\lesssim N^{1/2}.
\end{equation}

\noindent{\emph{Estimate for $|\mu_k-\lambda_k|$ and $|\mu_k-\lambda_{k+1}|$.}} To finish the proof, we will use Lemma \ref{spacingprop}. Returning to \eqref{eqn: distance}, the upper bound \eqref{eqn: Fbound} now implies
\begin{equation}\label{eqn: advanced}
-CN^{1/2+\epsilon} \le \frac{|\langle \mathbf{1}, v_k(M)\rangle|^2}{\mu_k(M)-\lambda_k(M)}+\frac{|\langle \mathbf{1}, v_{k+1}(M)\rangle|^2}{\mu_k-\lambda_{k+1}(M)} \le CN^{1/2+\epsilon}.
\end{equation}
Rearranging this inequality (noting that $\mu_k(M)-\lambda_{k+1}(M) <0$), we obtain:
\begin{equation} \label{eqn: boundtheratio}
-\frac{\mu_k(M)-\lambda_{k+1}(M)}{\mu_k(M)-\lambda_k(M)} \le -CN^{1/2+\epsilon}(\mu_k(M)-\lambda_{k+1}(M))+\frac{|\langle \mathbf{1}, v_{k+1}(M)\rangle|^2}{|\langle \mathbf{1}, v_{k}(M)\rangle|^2}.
\end{equation}
By Proposition \ref{spacingprop}, the first term on the right of \eqref{eqn: boundtheratio} is bounded by
\begin{align*}
-CN^{1/2+2\epsilon}(\mu_k(M)-\lambda_{k+1}(M))&\le -CN^{1/2+2\epsilon}(\lambda_k(M)-\lambda_{k+1}(M))\\
&\le -CN^{3\epsilon},
\end{align*}
with probability $1-o(1)$. The second term in \eqref{eqn: boundtheratio} is controlled using both \eqref{eqn: isotropic} and \eqref{eqn: bdbelow}:
\[\frac{|\langle \mathbf{1}, v_{k+1}(M)\rangle|^2}{|\langle \mathbf{1}, v_{k}(M)\rangle|^2}\le N^\epsilon\]
with probability $1-o(1)$. Thus, we have
\[\left|\frac{\mu_k(M)-\lambda_{k+1}(M)}{\mu_k(M)-\lambda_k(M)}\right|\le N^{3\epsilon}.\]

Repeating the argument in the last paragraph starting from \eqref{eqn: advanced}, but multiplying by $\mu_k(M)-\lambda_k(M)$ instead, we obtain:
\begin{equation}\label{eqn: stableratio}
N^{-3\epsilon}\le \left|\frac{\mu_k(M)-\lambda_{k+1}(M)}{\mu_k(M)-\lambda_k(M)}\right|\le N^{3\epsilon}
\end{equation}
with probability $1-o(1)$. By \eqref{eqn: spacing}, we have
\[(\lambda_{k+1}(M)-\mu_k(M))+(\mu_k(M)-\lambda_k(M))= \lambda_{k+1}(M)-\lambda_k(M) \ge N^{-1/2+\epsilon}\]
with probability $1-o(1)$, so
\begin{align*}
 &\min\{\mu_k(M)-\lambda_k(M),\lambda_{k+1}(M)-\mu_k(M)\}\\
\ge &\ N^{-3\epsilon}\max\{\mu_k(M)-\lambda_k(M),\lambda_{k+1}(M)-\mu_k(M)\}\\
\ge &\ N^{-3\epsilon}N^{-1/2-\epsilon}/2.
\end{align*}
That is, \eqref{eqn: zerolocation} holds also for $j=k$ and $j=k+1$ with probability $1-o(1)$ as $N\rightarrow \infty$.
\end{proof}

\section{Proof of Proposition \ref{spacingprop}}\label{sec: repulse}
Our proof uses results from \cite{ey}. (See Section \ref{sec: gapsec}.)
These rely on previous ``rigidity'' theorems  for $\beta$-ensemble measures; see \cite{betauniversality,edgeuniversality}. Attempting to reproduce these arguments for the ensemble \eqref{eq: pdf} leads to technical complications in case $N$ is odd, due to the appearance of a logarithmic term in the potential because of the eigenvalue at 0.
To circumvent this difficulty, we compare a minor of $M$ to an $N-1$-dimensional Gaussian matrix. When $N$ is even, there is no logarithmic term in the potential of the anti-symmetric Gaussian ensemble \eqref{eq: pdf}.

\begin{proof}[Proof of Proposition \ref{spacingprop}]Let $M^{(1)}$ be the $(1,1)$ minor of the matrix $M$, that is, the $(N-1)$-dimensional square matrix obtained by removing the first row and column of $M$.

Concerning the eigenvalues of the minor $M^{(1)}$, the spacing information we need is:
\begin{lemma}\label{minorspacinglemma}
For each $j$, we have
\begin{equation}\label{eqn: minorspacing}
|\lambda_{i+j}(M^{(1)})-\lambda_{i+j+1}(M^{(1)})|\asymp N^{-1/2},
\end{equation}
for $j=0,\ldots, n-1$.
\end{lemma}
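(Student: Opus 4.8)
The plan is to reduce the spacing estimate for the minor $M^{(1)}$ to the gap universality results of Erd\H{o}s and Yau \cite{ey}, exactly as one would for $M$ itself, but now exploiting the fact that $M^{(1)}$ is an $(N-1)\times(N-1)$ anti-symmetric Hermitian matrix with $N-1$ \emph{even}, so that the equilibrium measure \eqref{eq: pdf} for the corresponding Gaussian ensemble carries no logarithmic term in the potential (there is no forced eigenvalue at $0$). First I would record that $M^{(1)}$ is again of the form $i\times(\text{real anti-symmetric matrix with }\pm1\text{ entries off the diagonal})$, so Theorem \ref{eqn: sclaw} (the strong local semicircle law, which applies to $M^{(1)}$ with $N$ replaced by $N-1$) holds, giving eigenvalue rigidity on scale $N^{-1}$ up to $N^{\epsilon}$ factors in the bulk. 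In particular the classical location $\gamma_{i+j}$ of the $(i+j)$th eigenvalue satisfies $\rho_{sc}(\gamma_{i+j})\asymp 1$ for indices in $[\alpha(N-1)/2,(1-\alpha)(N-1)/2]$, so the expected spacing is of order $N^{-1/2}$; rigidity then immediately yields the upper bound $|\lambda_{i+j}(M^{(1)})-\lambda_{i+j+1}(M^{(1)})|\lesssim N^{-1/2}$.

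The lower bound is the substantive point, and here I would invoke gap universality. The idea is to run the anti-symmetric Dyson Brownian motion \eqref{eqn: asdbm-ornstein} (for the even-dimensional matrix, so the $1/(N\lambda_j)$ drift is absent and the equilibrium measure has potential $U(x)=x^2$ with only the two-body logarithmic interactions $\log|x_i-x_j|$ and $\log|x_i+x_j|$), started from $M^{(1)}$, and compare the short-time evolution to the Gaussian anti-symmetric ensemble. By \cite{ey}, adapted to this ensemble — the required inputs being the local semicircle law, rigidity, and level repulsion for the equilibrium measure, all of which hold here since the potential is regular and convex on the relevant region — the distribution of any fixed finite collection of rescaled gaps $N^{1/2}(\lambda_{i+j}(t)-\lambda_{i+j+1}(t))$ in the bulk converges, after a short time $t=N^{-1+\delta}$, to the corresponding gap distribution of the Gaussian model, which by Section \ref{sec: correlation} is the sine-kernel gap distribution. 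In particular the limiting gap law has no atom at $0$ and has a continuous distribution function, so $\mathbf{P}(N^{1/2}|\lambda_{i+j}(t)-\lambda_{i+j+1}(t)|\le \eta)\to 0$ as $\eta\downarrow 0$ uniformly in the index and in $N$; this gives $\lambda_{i+j}(M^{(1)}(t))-\lambda_{i+j+1}(M^{(1)}(t))\gtrsim N^{-1/2}$. Finally a Green's function / four-moment comparison argument (as in the proof sketch for Theorem \ref{thm1}, choosing the initial matrix so that $M^{(1)}(t)$ matches $M^{(1)}$ in the first four moments) transfers the lower bound from the DBM-evolved matrix back to $M^{(1)}$ itself, yielding \eqref{eqn: minorspacing}.

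The main obstacle is making precise that the gap universality machinery of \cite{ey} genuinely applies to the anti-symmetric ensemble \eqref{eq: pdf} with $N-1$ even: one must check that the hypotheses of the relevant theorems in \cite{ey,betauniversality,edgeuniversality} — a one-cut, regular equilibrium measure with a real-analytic, convex potential, together with the local law and rigidity — are all met, and that the reflection symmetry $x\mapsto -x$ built into the measure (the $\prod|x_i+x_j|$ factor and the restriction to positive eigenvalues) does not cause trouble. This is precisely parallel to the discussion already given for Theorem \ref{thm1}, where the anti-symmetric Dyson flow \eqref{eqn: asdbm} replaces the ordinary one; the point of passing to the minor is exactly to avoid the logarithmic term $-(c_N/N)\sum_j\log|x_j|$ that appears when $N$ is odd, which would fall outside the scope of the cited results. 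I expect the remaining steps — rigidity from Theorem \ref{eqn: sclaw}, the short-time DBM analysis, and the four-moment matching — to be routine given the adaptations already described in Sections \ref{sec: antigaussian}--\ref{sec: sine}, so I would present them briefly and concentrate the argument on verifying the regularity hypotheses for the even-dimensional anti-symmetric equilibrium measure.
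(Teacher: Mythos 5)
Your proposal is correct and follows essentially the same route as the paper: the paper also reduces the estimate to a gap-universality theorem for the even-dimensional anti-symmetric ensemble (its Theorem \ref{ourgapthm}, an adaptation of Erd\H{o}s--Yau), proved via Green's function comparison plus the Dyson Brownian motion analysis on local measures, with the decisive observation being exactly the one you make — that passing to the minor removes the logarithmic term $\log|x_j|$ from the potential, leaving only the extra $\log|x_i+x_j|$ interaction, whose harmlessness (convexity and ellipticity are preserved since $1/(x_i+x_j)^2\le 1/(x_i-x_j)^2$ for positive eigenvalues) is the regularity check you correctly flag as the main thing to verify.
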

Lemma \ref{minorspacinglemma} is proved proved in Section \ref{sec: gapsec}.

Assume that the first entry of the $(i+j)$th eigenvectors is non-zero
\begin{equation}\label{eqn: nonzerooverlap}
\langle v_{i+j}(M), \mathbf{e}_1\rangle \neq 0
\end{equation}
for $j=0,\ldots n-1$. Then the $(1,1)$ entry of the resolvent of $M$ is
\[R_{11}(s)=\sum_{j=1}^N\frac{|\langle v_j(M),\mathbf{e}_1\rangle|^2}{\lambda_j(M)-s}.\]
By Schur's complement, we can express $R_{11}$ as
\begin{equation}\label{eqn: Rinv}
R_{11}(s)^{-1} = M_{11}-s-\sum_{j=1}^{N-1}\frac{|\langle v_j(M^{(1)}), H \rangle|^2}{\lambda_j(M^{(1)})-s},
\end{equation}
with
\[H=(M_{12},\dots,M_{1,N-1}).\]
The Cauchy interlacing theorem implies that
\[\lambda_i(M^{(1)})\le \lambda_i(M)\le \lambda_{i+1}(M^{(1)}).\]
Consider the function
\[s\mapsto F^{(1)}(s)=s-\sum_{j=1}^{N-1}\frac{|\langle v_j(M^{(1)}), H \rangle|^2}{s-\lambda_j(M^{(1)})}.\]
By equation \eqref{eqn: Rinv}, the eigenvalue $\lambda_{i+j}(M)$ is located at the solution of the equation
\begin{equation}\label{eqn: M11}
F^{(1)}(s)=M_{11}
\end{equation}
along the real axis lying between the poles at $\lambda_{i+j}(M^{(1)})$ and $\lambda_{i+j+1}(M^{(1)})$. Conditioning on the vector $H$, we can again use the method of proof of Proposition \ref{prop: lowerbd} to show
\begin{equation}\label{eqn: minorlwrbd}
|\langle v_k(M^{(1)}), H \rangle|\gtrsim 1, \text{ for } k=i,\ldots,i+n.
\end{equation}
The equations for the eigenvectors in the even-dimensional case differ from \eqref{eqn: ev0DBM}, \eqref{eqn: evDBM}. There is no term in \eqref{eqn: evDBM} corresponding to the real eigenvector associated with the eigenvalue 0. Correspondingly, the equation \eqref{eqn: ev0DBM} is absent from the system. However, the implementation of the method is identical.

Given \eqref{eqn: minorlwrbd}, the function $F^{(1)}(s)$ is monotone on each interval between its poles at $\lambda_{i+j}(M^{(1)})$, $j=1,\ldots, n-1$, and so \eqref{eqn: M11} has a unique solution in each of these intervals: this solution is an eigenvalue $\lambda_{k}(M)$. The equation $F^{(1)}(s)=0$ similarly has a unique solution $\mu_{k}(M^{(1)})$ in this interval. Noting that
\[|M_{11}|=1,\]
we repeat the argument in the proof of Proposition \ref{prop: exist} to show
\begin{equation}
\label{closetozero}
|\mu_{k}(M^{(1)})-\lambda_k(M^{(1)})|\lesssim N^{-1}.
\end{equation}
Combining \eqref{closetozero} and \eqref{eqn: minorspacing}, we obtain Proposition \ref{spacingprop}.
\end{proof}

\subsection{Gap universality for anti-symmetric matrices}\label{sec: gapsec}
It remains to prove Lemma \ref{minorspacinglemma}. We will show that the main result in \cite{ey}, Theorem 2.2, proved for Wigner matrices with non-degenerate real and imaginary parts, can be extended to anti-symmetric matrices of even size such as $M^{(1)}$.

Consider the even dimensional Gaussian model, given by the measure
\begin{equation}\label{eqn: antiGUEeven}
\mu_{N-1}(\mathrm{d}x)=\frac{1}{Z'_{N-1}}\prod_{1\le j < k\le (N-1)/2}|\lambda_i^2-\lambda_j^2|^2\prod_{i=1}^{(N-1)/2}e^{-(N-1)\frac{\lambda_i^2}{2}}
\end{equation}
on $\Sigma_N^+=\{0<\lambda_1<\ldots <\lambda_{\frac{N-1}{2}}\}$. This is the joint eigenvalue distribution of $iK'/\sqrt{N-1}$, where $K'$ is an $(N-1)\times(N-1)$ anti-symmetric matrix from the Gaussian model \eqref{eqn: antiguemodel}, with $N$ odd. Compare this with \eqref{eq: pdf}.

Let $N$ be odd, and $M'$ be an $(N-1)\times (N-1)$ anti-symmetric Hermitian matrix, that is $(M')_{ij} = -(M')_{ji}$, and $\Re M_{ij} =0$ for $1\le i,j\le N$. We assume the entries are independent, identically distributed, with variance
$1$. As previously, we label the $N-1$ eigenvalues increasing order as
\[\lambda_{-(N-1)/2}(M')\le \lambda_{-(N-1)/2+1}(M') \le \ldots \le \lambda_{(N-1)/2-1}(M')\le \lambda_{(N-1)/2}(M').\]
Recall that $\lambda_{-j}(M)=-\lambda_{j}(M')$. From now on, we restrict our attention to the $(N-1)/2$ leading eigenvalues. The main result of this section is the following adaptation of \cite[Theorem 2.2]{ey}, proved in Section \ref{sec: gapsec}:
\begin{theorem} \label{ourgapthm}
Fix positive numbers $\alpha$, $O_\infty$ and $O_{supp}$, and $n\in \mathbf{N}$. Consider a sequence $O_N$ of $C^\infty(\mathbf{R}^n)$ $n$-variables observables, satisfying
\[ \sup_n \|O_N\|_\infty \le O_\infty, \quad
\mathrm{supp}\, O_N \subset [-O_{supp},O_{supp}]^n.\] 

Let $\lambda_j := \frac{1}{\sqrt{N-1}}\lambda_j(M')$. Then, for $O=O_N$ and any 
\[j\in [\alpha (N-1)/2,(1-\alpha)(N-1)/2],\] 
we have
\begin{equation}
\big| (\mathbf{E}-\mathbf{E}^{\mu_{N-1}})O(N(\lambda_j-\lambda_{j+1}),\ldots,N(\lambda_j-\lambda_{j+n}))\big| \le CN^{-\epsilon}\|O'\|_\infty,
\end{equation}
for $N\ge N_0(n,\alpha,O_\infty,O_{supp})$.
\end{theorem}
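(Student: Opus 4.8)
The plan is to adapt the proof of \cite[Theorem 2.2]{ey} to the even-dimensional anti-symmetric ensemble, following the same three-step structure: a rigidity input, a local relaxation flow comparison with the Gaussian model $\mu_{N-1}$ defined in \eqref{eqn: antiGUEeven}, and a Green function comparison removing the Gaussian component. First I would record the rigidity estimates for $M'$: since $M'$ is Hermitian in the usual sense with i.i.d.\ entries up to the anti-symmetry constraint, the strong local semicircle law (Theorem \ref{eqn: sclaw}, applied with $N$ replaced by $N-1$) applies, yielding $|\lambda_j(M')/\sqrt{N-1} - \gamma_j| \lesssim N^{-2/3}(\min\{j,N-j\})^{-1/3}$ for the classical locations $\gamma_j$ of the semicircle, uniformly in the bulk. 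The analogous rigidity for the $\beta=2$ equilibrium measure $\mu_{N-1}$ is what requires care: its Hamiltonian is $\mathcal H_{N-1}(\mathbf x)=\sum U(x_j) - \frac1N\sum_{i<j}\log|x_i-x_j| - \frac1N\sum_{i<j}\log|x_i+x_j|$ with $U(x)=x^2/2$ and, crucially, \emph{no} logarithmic $\log|x_j|$ term because $N-1$ is even --- this is exactly why we passed to the minor. One can then invoke the rigidity results for $\beta$-ensembles \cite{betauniversality,edgeuniversality}, noting that the symmetrized potential (obtained by folding the $\pm$ symmetry so that the variables are $x_j^2$) is real-analytic and strictly convex off a neighborhood of $0$, so that the arguments of those papers go through; the effective equilibrium density is again the semicircle on $[0,2]$ in the squared variable, i.e.\ the half-semicircle.

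Next I would set up the Dyson-type interpolation. Let $\mathbf x(t)$ be the solution of the anti-symmetric Dyson eigenvalue flow \eqref{eqn: asdbm-ornstein} (in its even-dimensional form, which differs only in the absence of the $\frac1N\frac1{\lambda_j}$ term since there is no eigenvalue at $0$), started at $\vec\lambda(M')$. As recorded in Section \ref{sec: antigaussian}, for each $t$ the law of $\mathbf x(t)$ equals that of the positive eigenvalues of $e^{-t/2}M' + (1-e^{-t})^{1/2} i G'$, $G'$ an independent even-dimensional anti-symmetric Gaussian matrix, and its $t\to\infty$ limit is $\mu_{N-1}$. The key estimate from \cite{ey} is that for $t = N^{-1+\delta}$ the gap statistics of $\mathbf x(t)$ already agree with those of $\mu_{N-1}$ up to $N^{-\epsilon}$; this follows from the local relaxation flow / level repulsion machinery of \cite{localrelax,ey}, whose only inputs are (a) the rigidity above, (b) the convexity of $\mathcal H_{N-1}$, and (c) a finite-speed-of-propagation / De Giorgi--Nash--Moser regularity argument for the associated parabolic equation, all of which are insensitive to the precise form of the singular interaction kernel as long as it is of the log-gas type --- which the $\frac1N\sum\frac1{\lambda_j-\lambda_l}+\frac1N\sum\frac1{\lambda_j+\lambda_l}$ drift here is. I would therefore state the needed analog of the local relaxation estimate and indicate that its proof is a line-by-line transcription with the extra $\lambda_j+\lambda_l$ interaction terms, which are \emph{smooth and bounded in the bulk} and only improve convexity.

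Finally I would remove the Gaussian component via Green function comparison: comparing $M'$ with $e^{-t/2}M'_0 + (1-e^{-t})^{1/2}iG'$ for a matched initial matrix $M'_0$ (four-moment matching as in \cite[Lemma 3.4]{bernoulliuniv}, \cite{bulkuniv}), using the entrywise local semicircle law from Theorem \ref{eqn: sclaw} to control resolvent expansions; since individual gaps are determined by the eigenvalue counting function, which in turn is a smooth functional of $\operatorname{Im}\operatorname{tr}R(z)$ at scales slightly below $N^{-1}$, the comparison of the first four moments of the entries suffices to transfer the gap statistics from the $t=N^{-1+\delta}$ Gaussian-divisible matrix back to $M'$ with an error $N^{-\epsilon}$. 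Chaining the two comparisons gives $|(\mathbf E - \mathbf E^{\mu_{N-1}})O(\ldots)| \le CN^{-\epsilon}\|O'\|_\infty$. The main obstacle, I expect, is the rigidity statement for $\mu_{N-1}$: one must check that the folded (squared-variable) $\beta=2$ ensemble genuinely satisfies the hypotheses of \cite{betauniversality,edgeuniversality} --- in particular that the fold does not spoil analyticity or convexity near the origin, and that the absence of the $\log|x_j|$ term (which is precisely the payoff of working with the even minor $M^{(1)}$ rather than $M$ itself) is what makes this work; everything downstream is then a mechanical adaptation of \cite{ey}.
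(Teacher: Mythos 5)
Your proposal follows the same overall skeleton as the paper's proof: rigidity inputs, Dyson-type relaxation to the Gaussian equilibrium measure $\mu_{N-1}$, and Green function comparison to remove the Gaussian component, with the observation that the extra $1/(x_j+x_k)^2$ terms in the Hessian of the log-gas Hamiltonian are nonnegative and of lower order than $1/(x_j-x_k)^2$ in the bulk, so the De Giorgi--Nash--Moser analysis of \cite{ey} goes through unchanged. Two points where you diverge from, or underspecify relative to, the paper. First, for rigidity of the global measure $\mu_{N-1}$ you propose folding to squared variables and invoking the $\beta$-ensemble rigidity of \cite{betauniversality,edgeuniversality}, and you correctly flag verifying the hypotheses for the folded potential as the main risk; the paper avoids this entirely by noting that $\mu_{N-1}$ is the eigenvalue law of an honest $(N-1)$-dimensional Gaussian anti-symmetric matrix, so the three required global rigidity estimates follow directly from the matrix-based local semicircle law of \cite[Theorem 7.6]{sc1}. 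This is both simpler and safer, and it is the real payoff of passing to the even-dimensional minor. Second, your middle step leans on the language of the local relaxation flow of \cite{localrelax}, but that machinery only yields statistics averaged over an energy window, whereas Theorem \ref{ourgapthm} concerns a gap at a \emph{fixed} index $j$. The paper instead works with the conditioned local measures $\mu_{\mathbf{y}}$ of \cite{ey}, and the substantive verifications are (i) rigidity and level repulsion for $\mu_{\mathbf{y}}$ (which depend only on convexity of the conditioned Hamiltonian, unaffected by the added $\log|x_i+x_j|$ term) and (ii) the random walk representation and H\"older regularity for the time-dependent parabolic equation $\partial_t\mathbf{v}=-\mathcal{A}(t)\mathbf{v}$, where the ellipticity of $\tilde{\mathcal{A}}$ is preserved because the new kernel contribution is nonnegative and dominated by the standard one. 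You mention the parabolic regularity argument, so the idea is present, but the conditioned local-measure structure and the level repulsion estimates are where the anti-symmetric interaction actually has to be checked, and your write-up passes over them.
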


Assuming Theorem \ref{ourgapthm}, we can give the
\begin{proof}[Proof of Lemma \ref{minorspacinglemma}]
Choosing $0\le O(x) \le 1$ to be equal to $1$ for $N^{-1/2-\epsilon/2} \le x \le N^{-1/2+\epsilon/2}$ and zero for $x\notin [(1/2)N^{1/2-\epsilon/2},(3/2)N^{1/2+\epsilon/2}]$, we find
\begin{align*}
&\mathbf{P}(N^{-1/2-\epsilon}\le |\lambda_{i+j}-\lambda_{i+j+1}| \le 2N^{-1/2+\epsilon})\\
\ge&\mathbf{E}^{\mu_{N-1}}Q(N^{-1/2-\epsilon}\le |\lambda_{i+j}-\lambda_{i+j+1}| \le 2N^{-1/2+\epsilon})-o(1) \\
= &1-o(1),
\end{align*}
yields \eqref{eqn: minorspacing} when applied to finitely many indices $j$.
\end{proof}

\subsubsection{Gap universality: Proof of Theorem \ref{ourgapthm}}
If $M'$ were a Wigner matrix in the GUE class, that is, if the real part were non-degenerate, Theorem \ref{ourgapthm} would be identical to (in fact, somewhat weaker) \cite[Theorem 2.2]{ey}. The proof of Theorem \ref{ourgapthm} is almost the same, but since the setting is slightly different, we will explain the necessary modifications in some detail.

The proof of \cite[Theorem 2.2]{ey} combines a Green's function comparison argument with an analysis of the dynamics of Dyson Brownian motion in terms of local measures (See \cite[Section 4.1]{ey}). These are conditioned versions of \eqref{eqn: antiGUEeven}: split the variables $\lambda_1,\ldots, \lambda_{(N-1)/2}$ as
\[(\lambda_1,\ldots , \lambda_{(N-1)/2})= (y_1,\ldots, y_{L-K-1},x_{L-K},\ldots, x_{L+K}, y_{L+K+1},\ldots, y_{(N-1)/2}).\]
Let 
\[\mathbf{x}=(x_{L-K}, \ldots, x_{L+K}) \text{ and } \mathbf{y}=(y_1,\ldots,y_{L-K-1},y_{L+K+1},\ldots,y_N).\]
$L$ and $K$ are parameters defined by
\begin{equation*}
L \in [\alpha (N-1)/2, (1-\alpha)(N-1)/2]\cap \mathbf{Z}_+, \quad N^\delta \le K\le N^{1/4}.
\end{equation*}
The local measure  $\mu_{\mathbf{y}}$ with \emph{boundary condition} $\mathbf{y}$ is defined by
\begin{equation}\label{eqn: localdef}
\mu_{\mathbf{y}}(\mathrm{d}\mathbf{x})=\mu_{\mathbf{y}}(\mathbf{x})\,\mathrm{d}\mathbf{x} \quad \text{ and } \quad \mu_{\mathbf{y}}(\mathbf{x})=\mu(\mathbf{x},\mathbf{y})\left[\int \mu(\mathbf{y},\mathbf{x})\,\mathrm{d}\mathbf{x}\right]^{-1}.
\end{equation}
The measure $\mu_{\mathbf{y}}$ also has the representation \cite[Eqs. (4.5)-(4.6)]{ey}
\begin{align}
\mu_{\mathbf{y}}&=Z^{-1}_{\mathbf{y}}e^{-N\mathcal{H}_{\mathbf{y}}}, \label{eqn: localmeasureform}\\
\mathcal{H}_{\mathbf{y}}(\mathbf{x})&=\sum_{i\in I} \frac{1}{2}V_{\mathbf{y}}(x_i)-\frac{2}{N-1}\sum_{i,j\in I, i<j}\left(\log|x_i-x_j|+\log|x_i+x_j|\right), \label{eqn: hamiltonian}\\
V_{\mathbf{y}}(x)&=\frac{x^2}{2}-\frac{4}{N-1}\sum_{j\notin I}(\log|x-y_j|+\log|x+y_j|).
\end{align}

The arguments in \cite[Section 6]{ey} leading to the proof of \cite[Theorem 2.2]{ey} can be reproduced word for word to prove Theorem \ref{ourgapthm}, provided the dynamics \cite[Equations (6.6), (6.7)]{ey} is replaced by the (odd) anti-symmetric Dyson Brownian motion:
\begin{align}
\partial_t f_t &= \mathcal{L}f_t\\
\mathcal{L}&:= \sum_{i=1}^N \frac{1}{N-1}\partial_i^2+\sum_{i=1}^{(N-1)/2} \left(-\frac{1}{2}\lambda_i+\frac{2}{N-1}\sum_{j\neq i}\frac{1}{\lambda_i-\lambda_j}+\frac{2}{N-1}\sum_{j\neq i}\frac{1}{\lambda_i-\lambda_j}\right)\partial_i.
\end{align}
on $\Sigma_{N-1}^+$. The proof requires three inputs:
\begin{enumerate}
\item Rigidity for the global measure $\mu_{N-1}$.
\item Rigidity and level repulsion for the local measures $\mu_{\textbf{y}}$ \eqref{eqn: localdef}.
\item Gap universality, in the sense of \cite[Theorem 4.1]{ey}, for local measures associated to the Gaussian ensemble \eqref{eqn: antiGUEeven}.
\end{enumerate}

\paragraph{Rigidity for the global measure.} Let us first address item (1). Rigidity refers to the fact that the individual eigenvalue locations under \eqref{eqn: antiGUEeven} are very close to those predicted by the limiting spectral distribution. For \eqref{eqn: antiGUEeven}, the limiting distribution for the positive eigenvalues is the quarter-circle distribution:
\[\mu_{qc}(\mathrm{d}x) = 2\rho_{sc}(\mathrm{d}x)\mathbf{1}_{[0,2]}(x)\,\mathrm{d}{x}.\]
The classical locations are determined by the relation
\[\int_0^{\gamma_j}\rho_{sc}(\mathrm{d}x)= \frac{j}{(N-1)/2}.\]
The global rigidity statement required in \cite{ey} (See Eqn. 4.7, and Section 5.1) is that, for any fixed $\alpha>0$, $\nu>0$, there exist constants $C_0,c_1,c_2>0$, such that for any $N$ large enough:
\begin{equation}
\mathbf{P}^{\mu_{N-1}}(|\gamma_k-\lambda_k|>N^{-1+\nu})\le C_0\exp(-c_1N^{c_2}), \label{eqn1}
\end{equation}
for $k\in [\alpha(N-1)/2,(1-\alpha)(N-1)/2]$,
\begin{equation}\
\mathbf{P}^{\mu_{N-1}}(|\gamma_k-\lambda_k|>N^{-4/15+\nu})\le C_0\exp(-c_1N^{c_2}), \label{eqn2}
\end{equation}
for $N^{3/5}\le k \le (N-1)/2-N^{3/5+\nu}$, and
\begin{equation}
\mathbf{P}^{\mu_{N-1}}(|\gamma_k-\lambda_k|>C)\le C_0\exp(-c_1N^{c_2}) \label{eqn3}
\end{equation}
for all $0\le k\le (N-1)/2$. These three estimates (indeed, a much stronger result) follows from the local semi-circle law applied to the Gaussian matrix model for \eqref{eqn: antiGUEeven}, see \cite[Section 7, Theorem 7.6]{sc1}.

\paragraph{Rigidity for the local measures $\mu_{\mathbf{y}}$.}
This is the statement that, for each $k\in I_{L,K}=[L-K,L+K]\cap \mathbf{Z}_+$ and any fixed $\xi>0$:
\begin{equation}
\label{eqn: subg}
\mathbf{P}^{\mu_{\mathbf{y}}}(|x_k-\alpha_k|\ge uK^\xi N^{-1})\le Ce^{-cu^2}.
\end{equation}
(See \cite[Theorem 4.2]{ey}.) 
Here, 
\[\alpha_k = \frac{1}{2}(y_{L-K-1}+y_{L+K+1})+\frac{j-L}{2K+2}, \quad j \in I_{L,K}.\]
The proof of \cite[Theorem 4.2]{ey} in \cite[Section 7.1]{ey} depends only on the convexity of the Hamiltonian \eqref{eqn: hamiltonian} for boundary conditions $\mathbf{y}$ in a set of overwhelming probability. This convexity is not affected by the additional $\log|x_i+x_j|$ term. Level repulsion, the second part of item (2) above, is the statement that, for boundary conditions $\mathbf{y}$ satisfying the rigidity conditions \eqref{eqn1}, \eqref{eqn2}, \eqref{eqn3}, we have:
\[\mathbf{P}^{\mu_{\mathbf{y}}}(x_{i+1}-x_i \le s/N) \le C(Ns)^{\beta+1}, \quad  i\in [L-K-1,L+K]\cap \mathbf{Z}_+,\]
and moreover
\[\mathbf{P}^{\mu_{\mathbf{y}}}(x_{i+1}-x_i\le s/N)\le C(K^\xi s\log N)^{\beta+1}, \quad i\in [L-K-1,L+K]\cap \mathbf{Z}_+,\]
if 
\[\mathbf{P}^{\mu_{\mathbf{y}}}(|x_k-\alpha_k|\ge CK^\xi N^{-1})\le C\exp(-K^c)\]
for some constants $C$, $c$ and any $k\in I$. The proof for the anti-symmetric ensemble is identical to the one given in \cite[Section 7.1]{ey}.

\paragraph{Gap universality for local measures via H\"older regularity.} The proof of item (3) relies on items (1) and (2), and is the main result in \cite{ey}. It follows directly from a comparison result between two local measures of the form \eqref{eqn: localmeasureform}, on the interval $J$ and the same subset of indices $I$, with different potentials $V_{\mathbf{y}}$ and $V_{\mathbf{y}'}$. In the case that concerns us, the two potentials are Gaussian with $\beta=2$, and differ only through their boundary conditions. The starting point is an interpolation between observables with respect to $\mu_{\mathbf{y}}$ and with respect to $\mu_{\tilde{\mathbf{y}}}$:
\begin{align}
\mathbf{E}^{\mu_{\tilde{\mathbf{y}}}}F(\mathbf{x})-\mathbf{E}^{\mu_{\mathbf{y}}}F(\mathbf{x}) &= \int_0^1 \beta \langle h_0(\mathbf{x}); Q(\mathbf{x})\rangle_{\omega^r_{\mathbf{y},\tilde{\mathbf{y}}}}\,\mathrm{d}r, \label{eqn: interpolation-integral}\\
\omega^r_{\mathbf{y},\tilde{\mathbf{y}}} &= Z_r^{-1} e^{-\beta r(V_{\tilde{\mathbf{y}}}-V_{\mathbf{y}})(\mathbf{x})}\mu_{\mathbf{y}},\\
h_0(\mathbf{x})&=\sum_{i\in I} (V_{\mathbf{y}}(x_i)-V_{\tilde{\mathbf{y}}}(x_i)).
\end{align}
The idea in \cite{ey} is to use a representation for the integrand in \eqref{eqn: interpolation-integral} as a random walk in a random environment to show that it is small, uniformly in $0\le r \le 1$ for observables $F=O$, where 
\[O(x_p-x_{p+1},\ldots, x_p-x_{p+n})\]
is a function of particle gaps. Indeed, by \cite[Theorem 8.1]{ey}, there are constants $\epsilon>0$ and $C>0$ such that
\[|\langle h_0; O(x_p-x_{p+1},\ldots,x_p-x_{p+n})\rangle_{\omega^r_{\mathbf{y},\tilde{\mathbf{y}}}}|\le K^{C\xi}K^{-\epsilon}\|O'\|_{\infty},\]
for fixed $\xi^*>0$, $\xi$ sufficiently small and $K$ large enough.

 In the case where $O$ depends on a single gap, the random walk representation is
\[
\langle h_0; O(x_p-x_{p+1})\rangle_\omega = \frac{1}{2}\int_0^\infty \int \sum_{b\in I} \partial_b h_o(x) \mathbf{E}_x\left[O'(x_p-x_{p+1})\left(v_p^b(\sigma)-v_{p+1}^b(\sigma)\right)\right]\omega(\mathrm{d}\mathbf{x}),
\]
where we have written $\omega=\omega_{\mathbf{y},\tilde{\mathbf{y}}}$. To define $\mathbf{v}(\sigma)$, we first introduce $\mathbf{x}(s)$, $s\ge 0$, the solution of the stochastic dynamics determined by
\[\mathrm{d}x_i = \mathrm{d}B_i + 2\partial_i \mathcal{H}_{\mathbf{y},\tilde{\mathbf{y}}}\mathrm{d}t\]
with initial condition $\mathbf{x}(0)=\mathbf{x}$. $\mathcal{H}_{\mathbf{y},\tilde{\mathbf{y}}}$ is the Hamiltonian associated with the interpolation measure:
\[\mathrm{d}\omega^r_{\mathbf{y},\tilde{\mathbf{y}}}= Z_\omega^{-1} e^{-2\mathcal{H}_{\mathbf{y},\tilde{\mathbf{y}}}(\mathbf{x})}\,\mathrm{d}\mathbf{x}. \]
For a fixed path $\{\mathbf{x}(s): s\ge 0\}$, define the matrix
\begin{align*}
\mathcal{A}(s)&=\tilde{\mathcal{A}}(\mathbf{x}(s))\\
\tilde{\mathcal{A}}&=2\mathrm{Hess}\mathcal{H}_{ \mathbf{y},\tilde{\mathbf{y}} }(\mathbf{x}).
\end{align*}
$\mathbf{v}^b(t,\mathbf{x}(t))$ is defined as the solution of the equation 
\begin{equation}
\label{eqn: degiorgiequation}
\partial_t \mathbf{v}^b(t) = -\mathcal{A}(t)\mathbf{v}^b(t), \quad v^b_a(0)=\delta_{ba}(0).
\end{equation}
The core of \cite{ey} is an analysis of the parabolic evolution with time-dependent random coefficients \eqref{eqn: degiorgiequation}. In addition to the probabilistic properties of the paths $\mathbf{x}(s)$ which are derived from the rigidity and level repulsion estimates (item (2)) above (see \cite[Section 9.3]{ey}), the main input is the elliptic structure of the matrix 
\[\tilde{\mathcal{A}}=\tilde{B}+\tilde{W},\]
where 
\[[\tilde{B}(\mathbf{x})\mathbf{v}]_j = -\sum_k \tilde{B}_{jk}(\mathbf{x})(v_k-v_j)\]
is the part of the Hessian of the Hamiltonian coming from the logarithmic interaction:
\begin{equation}\label{eqn: Bbounds}
\tilde{B}_{jk}(\mathbf{x}) =\frac{2}{(x_j-x_k)^2}+\frac{2}{(x_j+x_k)^2}\ge 0.
\end{equation}
The proofs in Sections 8, 9 and 10 of \cite{ey} are unaffected by the additional term $1/(x_j+x_k)^2$ in \eqref{eqn: Bbounds}, since this term is of lower order than $1/(x_j-x_k)^2$:
\[x_j,x_k \in [0,2] \Rightarrow \frac{1}{(x_j+x_k)^2} \le \frac{1}{(x_k-x_j)^2}.\]
Given that the rigidity and level repulsion estimates remain the same whether this term is present in the logarithmic interaction or not, we can follow all the steps in \cite{ey} to establish for the anti-symmetric ensemble \eqref{eqn: antiGUEeven}, and obtain Theorem \ref{ourgapthm}.

\section{The GUE case}\label{sec: gueext}
The method of proof in the Section \ref{sec: inter2} can be applied to perturbations of a GUE matrix. The result is as follows:
\begin{theorem}
Let $G$ be an $N\times N$ GUE matrix, and let $\mathbf{b}_N \in \mathbf{R}^N$ be a unit vector.
Consider the matrix
\[W = G + iN\cdot \mathbf{b}_N\mathbf{b}_N^\intercal.\]
Let $0<\alpha<1$. Then there is $\epsilon>0$ such that, for each $i\in [\alpha N,(1-\alpha)N]$, and each $j=0,\ldots n-1$, $W$ has an eigenvalue (which we label $\lambda_{i+j}(W)$) with
\[\lambda_{i+j}(G) < \Re \lambda_{i+j}(W) < \lambda_{i+j+1}(G),\]
with probability $1-N^{-\epsilon}$.
\begin{proof}
Let $X=(X_1,\ldots,X_N)^\intercal$ be a complex Gaussian vector with mean 0 and covariance $I$. Then $UX$ is Gaussian vector with the same distribution for any unitary transformation $U$. From this, it follows easily that
\[Y=\frac{X}{\|X\|_2}\]
has uniform distribution on the unit sphere in $\mathbf{C}^N$. In particular, if $v$ is any eigenvector of the GUE matrix $G$, the distributions of $\langle v,\mathbf{b}_N \rangle$ and
$\langle Y, \mathbf{b}_N\rangle$ are identical. Thus
\begin{align*}
\mathbf{P}(\sqrt{N}|\langle v, \mathbf{b}_N\rangle|\le \eta)&= \mathbf{P}(\sqrt{N}|\langle Y, \mathbf{b}_N\rangle|\le \eta)\\
&= \mathbf{P}( \sqrt{N}|X_1|\le \eta\|X\|_2)\\
&\le \mathbf{P}(|X_1|\le \eta N^{\epsilon})+\mathbf{P}(Z/\sqrt{N} \ge N^{\epsilon}),
\end{align*}
where $Z$ has the chi distribution with $N$ degrees of freedom. Choosing $\eta \le N^{-2\epsilon}$, we obtain
\begin{equation}
\mathbf{P}(\sqrt{N}|\langle v, \mathbf{b}_N\rangle|\le N^{-\epsilon}) \le CN^{-\epsilon}.\label{eqn: gueeigenvectbound}
\end{equation}
Replacing \eqref{prop: lowerbd} by \eqref{eqn: gueeigenvectbound} in the proof of Theorem 2, (ii) above to obtain \eqref{eqn: bdbelow}, the result follows. Note also that we do not need to appeal to the results in \cite{bloemendaletal} to obtain the bound, since the distribution of the projections of eigenvectors are known.
\end{proof}
\end{theorem}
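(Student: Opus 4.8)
The plan is to run the argument of Sections \ref{sec: inter2} and \ref{sec: sep-lma} essentially verbatim, with the GUE matrix $G$ in place of the anti-symmetric matrix $M$ and the unit vector $\mathbf{b}_N$ in place of $\mathbf{1}/\sqrt{N}$. First I would record the perturbation equation: since $W = G + iN\mathbf{b}_N\mathbf{b}_N^\intercal$ is a rank-one perturbation of $G$, the determinant identity (\cite[Lemma 2.1]{ttao}) shows that the eigenvalues of $W$ which are not eigenvalues of $G$ are exactly the solutions of $F(z)=1$, where
\[
F(z) = iN\sum_{j=1}^N \frac{c_j}{z-\lambda_j(G)}, \qquad c_j := |\langle v_j(G),\mathbf{b}_N\rangle|^2 \ge 0, \quad \sum_j c_j = 1,
\]
and $v_j(G)$ is an orthonormal eigenbasis of $G$. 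Restricted to the real axis $F$ is purely imaginary, and on any interval $(\lambda_j(G),\lambda_{j+1}(G))$ the function $x\mapsto \sum_l c_l/(x-\lambda_l(G))$ decreases strictly from $+\infty$ to $-\infty$ whenever $c_j,c_{j+1}>0$; hence it has a unique zero $\mu_j\in(\lambda_j(G),\lambda_{j+1}(G))$ with $F(\mu_j)=0$. The eigenvalue $\lambda_{i+j}(W)$ will be produced as the solution of $F(z)=1$ lying near $z=\mu_{i+j}$, so that $\Re\lambda_{i+j}(W)\approx \mu_{i+j}$ falls in the prescribed gap.

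Next I would assemble the three ingredients, needed only for the $O(1)$ bulk indices $k\in\{i,\ldots,i+n\}$. The decisive simplification in the GUE case is that the eigenvectors $v_j(G)$ are Haar-distributed on the unit sphere of $\mathbf{C}^N$, so $\langle v_j(G),\mathbf{b}_N\rangle$ has the law of the first coordinate of a uniform point on the sphere. This yields at once both the delocalization bound $\max_j\sqrt{N}|\langle v_j(G),\mathbf{b}_N\rangle|\lesssim 1$ (replacing \eqref{eqn: isotropic}, so that \cite{bloemendaletal} is not needed) and, for any polynomially small $a_N$, the anti-concentration bound $\mathbf{P}(\sqrt{N}|\langle v_j(G),\mathbf{b}_N\rangle|\ge a_N)\ge 1-CN^{-\epsilon}$ (replacing Proposition \ref{prop: lowerbd}, so that the Bourgade--Yau machinery of Section \ref{bysection} is not needed); a union bound over the finitely many indices then gives $c_k\asymp N^{-1}$ for $k\in\{i,\ldots,i+n\}$. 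The remaining two inputs --- the local semicircle law (Theorem \ref{eqn: sclaw} applies to GUE) and the bulk gap estimate $|\lambda_{i+l}(G)-\lambda_{i+l+1}(G)|\asymp N^{-1/2}$ (for GUE this follows from the explicit determinantal correlation functions, or from rigidity plus gap universality; unlike in Proposition \ref{spacingprop} no passage to a minor is required, since the Gaussian model is already in the correct symmetry class and carries no logarithmic term) --- are classical.

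With these in hand the rest is a transcription of Sections \ref{sec: inter2} and \ref{sec: sep-lma}. The analogue of Lemma \ref{lma: separation}, $\min\{\mu_j-\lambda_j(G),\,\lambda_{j+1}(G)-\mu_j\}\gtrsim N^{-1/2}$, comes from writing $F(\mu_j)=0$ as
\[
\frac{c_j}{\mu_j-\lambda_j(G)}+\frac{c_{j+1}}{\mu_j-\lambda_{j+1}(G)} = -\sum_{l\neq j,j+1}\frac{c_l}{\mu_j-\lambda_l(G)},
\]
estimating the right side by $N^{-1/2+\epsilon}$ with a dyadic decomposition governed by the local semicircle law and the delocalization bound, and then using the lower bounds on $c_j,c_{j+1}$ together with the gap estimate to rule out either term on the left dominating. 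One then expands $F$ about $\mu_j$ by Cauchy's formula exactly as in \eqref{eqn: derivlowerbd}--\eqref{eqn: Fsummary}: $|F'(\mu_j)|\gtrsim N$ because $c_j\asymp N^{-1}$ and $|\mu_j-\lambda_j(G)|\lesssim N^{-1/2}$, while $|F|\lesssim N^{1/2}$ on a circle of radius $N^{-1/2-c}$ about $\mu_j$; Rouch\'e's theorem then yields a zero $z_j$ of $F(z)-1$ with $|z_j-\mu_j|\le N^{-1+\epsilon}$. Since the gap at $\mu_j$ has size $\asymp N^{-1/2}$ and $\mu_j$ sits at distance $\gtrsim N^{-1/2}$ from each endpoint, $\Re z_j = \mu_j + O(N^{-1+\epsilon})$ still lies strictly inside $(\lambda_j(G),\lambda_{j+1}(G))$. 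A final union bound over $j=i,\ldots,i+n-1$ completes the argument.

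I do not expect a genuine obstacle here: the point of this section is precisely that the GUE case is \emph{easier} than the tournament case, because the eigenvector overlaps are explicit and the Gaussian reference ensemble is already in the right symmetry class. If one must single out the most delicate point, it is that the statement asks for a \emph{polynomial} probability bound $1-N^{-\epsilon}$ rather than merely $1-o(1)$, so each input above must be invoked in quantitative form and the exponents $c,\epsilon$ chosen compatibly; but the sphere computation already produces a power of $N$ and GUE rigidity is exponentially strong, so this is pure bookkeeping.
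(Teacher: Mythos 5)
Your proposal follows essentially the same route as the paper: the paper's proof consists precisely of the sphere computation for $\langle v_j(G),\mathbf{b}_N\rangle$ (giving both the anti-concentration lower bound in place of Proposition \ref{prop: lowerbd} and the delocalization upper bound without appeal to \cite{bloemendaletal}), followed by rerunning the argument of Sections \ref{sec: inter2} and \ref{sec: sep-lma} verbatim. Your write-up simply makes explicit the steps the paper leaves implicit (the perturbation equation for $W$, the gap estimate for GUE without passing to a minor, and the bookkeeping needed for the polynomial probability bound), and is correct.
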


\appendix
\section{Bourgade and Yau's eigenvector moment flow}
\label{sec: by}
In \cite{bourgadeyau}, the authors study the evolution under the Hermitian analog of \eqref{eqn: evDBM} \cite[Equation (2.3)]{bourgadeyau} of the joint moments of the quantities
\[w_k=\langle \mathbf{q}, u_k \rangle, \quad k=1,\ldots,N,\]
where $u_k$ are the eigenvectors of an $N\times N$ Hermitian random matrix and $\mathbf{q}$ is a unit vector in $\mathbf{R}^N$.

Define
\[{\mathrm{P}^{(h)}}^{j_1,\ldots,j_m}_{i_1,\ldots,i_m}(w_1,\ldots,w_N) = \prod_{l=1}^m w^{j_l}_{i_l}\bar{w}_{i_l}^{j_l}.\]
(The superscript $(h)$ stands for ``Hermitian.'')
Let $H_0$ be a Hermitian Wigner matrix, and $H(t)$, $t\ge 0$, be the solution of the Hermitian Dyson Brownian motion with initial data $H_0$. Suppose $(w_1(t), \ldots, w_N(t))$ are the solutions to the Hermitian Dyson Brownian motion with initial data $\vec{w}(H_0)$, where $\vec{w}(H_0)$ is an orthogonal set of eigenvectors of $H_0$. 
We introduce the normalized moments, conditional on $H_0$ and the eigenvalues $\vec{\lambda}=(\lambda_1(t),\ldots,\lambda_N(t))$ of $H(t)$:
\begin{equation}\label{Q}
{\mathrm{Q}_{ \vec{\lambda} } }^{j_1,\ldots,j_m}_{i_1,\ldots,i_m}(t)=\mathbf{E}^{H_0}({\mathrm{P}^{(h)}}^{j_1,\ldots,j_m}_{i_1,\ldots,i_m}(t) | \ \vec{\lambda})\prod_{l=1}^m (2^{j_l}j_l!)^{-1}.
\end{equation}
The evolution of the quantities \eqref{Q} is given by
\begin{align}\label{eqn: dQdt}
\partial_t {\mathrm{Q}_{ \vec{\lambda} }(t) } &= \sum_{1\le k< l\le N} c_{kl}(t)V_{kl}\overline{V}_{kl}\mathrm{Q}_{ \vec{\lambda} }(t),\\
V_{kl}&=w_k\partial_{w_l}-\overline{w}_l\partial_{\overline{w}_k},\  \overline{V}_{kl} = \overline{w}_k\partial_{\overline{w}_l}-w_l\partial_{w_k} \nonumber \\
c_{kl}(t)&=\frac{1}{N(\lambda_k(t)-\lambda_l(t))^2}.\nonumber
\end{align} 
Here we have omitted the indices on the moment $\mathrm{Q}_{ \vec{\lambda} }(t)$. 

The action of the terms $V_{kl}\overline{V}_{kl}$ on the moments is given by
\begin{align}
V_{kl}\overline{V}_{kl}{\mathrm{Q}_{ \vec{\lambda} }}^{j_1,\ldots,j_m}_{i_1,\ldots,i_m}(t) &= j_1{\mathrm{Q}_{ \vec{\lambda} }}^{1,j_1-1,\ldots,j_m}_{l,i_1,\ldots,i_m}(t)-j_1{Q_{\vec{\lambda}}}^{j_1,\ldots,j_m}_{i_1,\ldots,i_m}(t), \quad l\notin \{i_1,\ldots,i_m\}, \label{eqn: generator1}\\
V_{kl}\overline{V}_{kl}{\mathrm{Q}_{ \vec{\lambda} }}^{j_1,\ldots,j_m}_{i_1,\ldots,i_m}(t) &= j_1(j_2+1){\mathrm{Q}_{ \vec{\lambda} }}^{j_1-1,j_2+1,\ldots,j_m}_{i_1,\ldots,i_m}(t) \label{eqn: generator2}\\
&\quad +j_2(j_1+1){\mathrm{Q}_{\vec{\lambda}}}^{j_1+1,j_2-1,\ldots,j_m}_{i_1,\ldots,i_m}(t) \nonumber  \\
&\quad -(j_1(j_2+1)+j_2(j_1+1)){\mathrm{Q}_{ \vec{\lambda} }}^{j_1,\ldots,j_m}_{i_1,\ldots,i_m}(t) \nonumber
\end{align}
The relations \eqref{eqn: generator1}, \eqref{eqn: generator2} afford an interpretation of  \eqref{eqn: dQdt} as a multiparticle random walk. The indices
\[\{(i_1,j_1),\ldots,(i_m,j_m)\}\]
on the moments \eqref{Q} are taken to represent configurations of particles at locations $1,...N$, $j_k$ being the number of particles at $i_k\in \{1,...,N\}$. Each such configuration corresponds to a function
\[\mathbf{\eta}:\{1,\ldots, N\} \rightarrow \mathbf{Z}_{\ge 0},\]
where
\begin{align*}
\mathbf{\eta}(i_k)&:=\eta_{i_k}=j_k,\\
\mathbf{\eta}(i)&=0,\quad i\notin \{i_1,\ldots, i_m\}.
\end{align*}
Then, \eqref{eqn: dQdt} describes the evolution along the eigenvector flow, conditioned on the eigenvalues, of the function 
\begin{equation}
f^{(h)}_{\vec{\lambda}}(t,\mathbf{\eta})= {\mathrm{Q}_{ \vec{\lambda} }(t)}^{j_1,\ldots,j_m}_{i_1,\ldots,i_m}(t)
\end{equation}
of configurations with fixed particle number $\mathcal{N}(\mathbf{\eta})=\sum_j \eta_j$. Equations \eqref{eqn: generator1}, \eqref{eqn: generator2} imply that the flow preserves particle number. The equations for Bourgade and Yau's \emph{eigenvector moment flow} are
\begin{equation}
\partial_t f^{(h)}_{\vec{\lambda}}(t,\mathbf{\eta}) = \sum_{i\neq j} c_{ij}(t)\eta_i(1+\eta_j)(f(\mathbf{\eta}^{i,j})-f(\mathbf{\eta})),
\end{equation}
with $\mathbf{\eta}^{i,j}$ the configuration obtained from $\mathbf{\eta}$ by moving a single particle from location $i$ to location $j$.

\cite[Theorem 4.3]{bourgadeyau} shows that for each total particle number, the quantity $f_t(\mathbf{\eta})$ converges to $1$ uniformly, at a polynomial rate with $N$, provided $t\ge N^{-\frac{1}{4}+\epsilon}$. This implies \cite[Corollary 4.4]{bourgadeyau} that all moments of projections of the eigenvectors of matrices with a small Gaussian component converge to the corresponding Gaussian moments, uniformly over moments of any fixed order. This is then extended to general Wigner matrices by a comparison argument \cite[Section 5]{bourgadeyau}, to obtain the result:
\begin{theorem}[Bourgade, Yau]
For any sequence of generalized Hermitian Wigner matrices, fixed index set $I$ and unit vector $\mathbf{q}$ in $\mathbf{R}^N$,
\[\sqrt{2 N} |\langle \mathbf{q},u_k\rangle|)_{k\in I}\rightarrow (|N^{(1)}(0,1)+iN^{(2)}(0,1)|)_{j=1}^m,\]
in the sense of moments, uniformly in $\mathbf{q}$ and $I$ such that $|I|\le m$ fixed. $N^{(1)}$ and $N^{(2)}$ are independent $N(0,1)$ random variables.
\end{theorem}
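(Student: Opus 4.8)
The plan is to follow the three-step scheme of Bourgade and Yau: first establish the Gaussian limit for the overlap moments of matrices carrying a small Gaussian component, using the relaxation of the eigenvector moment flow, and then remove that Gaussian component by a Green's function comparison. Concretely, given a sequence of generalized Hermitian Wigner matrices $H_0$, I would run the Ornstein--Uhlenbeck version of Dyson Brownian motion, $H(t)=e^{-t/2}H_0+(1-e^{-t})^{1/2}G^{OU}$, so that the entries of $H(t)$ acquire a Gaussian component of size $\sim t$ while the first few moments of the entries are kept matched to those of $H_0$. The eigenvectors $u_k(t)$ of $H(t)$ then evolve by the Hermitian eigenvector flow \cite[Eq.~(2.3)]{bourgadeyau}, and the joint moments $\mathrm{Q}_{\vec\lambda}(t)$ of the overlaps $w_k(t)=\langle\mathbf{q},u_k(t)\rangle$, conditioned on the eigenvalue trajectory, satisfy the closed parabolic system \eqref{eqn: dQdt}. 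Via the particle interpretation this becomes the eigenvector moment flow $\partial_t f_{\vec\lambda}(t,\eta)=\sum_{i\neq j}c_{ij}(t)\eta_i(1+\eta_j)(f(\eta^{i,j})-f(\eta))$, a multi-particle hopping dynamics with rates $c_{ij}(t)=1/(N(\lambda_i(t)-\lambda_j(t))^2)$, whose unique equilibrium on each fixed particle-number sector is $f\equiv 1$ --- the value associated with Haar-distributed (hence complex Gaussian) overlaps, since with the normalization $(2^{j}j!)^{-1}$ one has $\mathbf{E}|N^{(1)}+iN^{(2)}|^{2j}=2^{j}j!$.

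The core of the argument is to show that $f_{\vec\lambda}(t,\eta)$ relaxes to $1$ uniformly in the configuration $\eta$ --- hence uniformly in $\mathbf{q}$ and in the index set $I$, which enter only through the initial condition --- at a polynomial rate in $N$, provided $t\ge N^{-1/4+\epsilon}$. I would establish this by the maximum-principle and De Giorgi--Nash--Moser strategy of \cite{bourgadeyau}: use the $L^\infty$ contraction property of the generator to localize, then run a Nash-type iteration on the evolution, the crucial input being eigenvalue rigidity for $H(t)$. Indeed the strong local semicircle law gives $\lambda_i(t)-\lambda_j(t)\asymp|i-j|/N$ in the bulk with overwhelming probability, so the rates $c_{ij}(t)$ behave like those of a mean-field random walk and the corresponding time-dependent heat kernel has the required decay; combined with finite-speed-of-propagation estimates and a comparison with a simpler reference dynamics, this upgrades the $L^2$ decay to the uniform $L^\infty$ decay across the finitely many particle-number sectors relevant to a fixed moment order $m$. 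This yields the conclusion for $H(t)$ with $t=N^{-1/4+\epsilon}$.

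Finally, to return from $H(t)$ to $H_0$, I would invoke a Green's function comparison argument of the Erd\H{o}s--Yau--Yin type \cite{bulkuniv}: the overlap moments $\mathbf{E}\prod_l|\langle\mathbf{q},u_{k_l}\rangle|^{2j_l}$ can be written, via contour integrals of products of resolvent entries $G_{ab}(z)=((H-z)^{-1})_{ab}$ around the relevant eigenvalues, as smooth functionals of the resolvent which the local law controls, so that the four-moment-matching Lindeberg swap changes them by $o(1)$ when $H_0$ is replaced by the moment-matched $H(t)$. Combining the two steps gives convergence of all fixed-order overlap moments to the complex Gaussian values, uniformly over $\mathbf{q}$ and $I$ with $|I|\le m$. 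The main obstacle is unquestionably the relaxation estimate for the eigenvector moment flow in the second step: obtaining a quantitative, configuration-uniform $L^\infty$ rate for a parabolic equation with singular, random, time-dependent coefficients is exactly the technical heart of \cite{bourgadeyau}, whereas the Ornstein--Uhlenbeck reduction and the comparison step are comparatively routine given the local semicircle law.
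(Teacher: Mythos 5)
Your proposal is correct and follows essentially the same route as the paper, which at this point simply recapitulates Bourgade and Yau's argument: relaxation of the eigenvector moment flow to the constant function $1$ (uniformly in the particle configuration, for $t\ge N^{-1/4+\epsilon}$) for matrices with a small Gaussian component, followed by a Green's function comparison to remove that component. Your identification of the normalization $\mathbf{E}|N^{(1)}+iN^{(2)}|^{2j}=2^j j!$ and of the relaxation estimate as the technical heart matches the paper's presentation.
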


\subsection{Generator for the anti-symmetric moment flow}
We wish to reproduce the argument in \cite{bourgadeyau} for the anti-symmetric Dyson Brownian motion \eqref{eqn: asdbm}, \eqref{eqn: ev0DBM}, \eqref{eqn: evDBM}.
In this case, the generator of the flow on moments \eqref{eqn: dQdt}, is somewhat more complicated. Letting 
\[f\left(u_i(\alpha)_{1\le i,\alpha \le (N-1)/2}, \overline{u}_i(\alpha)_{1\le i,\alpha \le (N-1)/2},u_0(\alpha)_{1\le\alpha\le (N-1)/2}\right)\]
 be a smooth function of the eigenvector entries. Then, if $F=\mathbf{E}(f\mid \vec{\lambda}(t))$, a computation using Ito's formula similar to \cite[Appendix B]{bourgadeyau} shows
\begin{align}
\partial_t F &= \frac{1}{2}\sum_{1\le i<j\le (N-1)/2} c_{ij}(t) (X_{ij}\overline{X_{ij}}+\overline{X_{ij}}X_{ij}) F\\
& +\frac{1}{2}\sum_{1\le i<j\le (N-1)/2}\tilde{c}_{ij}(t) (Y_{ij}\overline{Y_{ij}}+\overline{Y_{ij}}Y_{ij})  F \nonumber \\
& +\frac{1}{2}\sum_{1\le j\le (N-1)/2} c_{0j}(t) (X_{0j}\overline{X_{0j}}+\overline{X_{0j}}X_{0j})F.\nonumber 
\end{align}
Here we have used the notations:
\begin{align}
X_{ij}&=\overline{u}_i\overline{\partial_j}-u_j\partial_i, & \overline{X}_{ij}= u_i\partial_j-\overline{u}_j\overline{\partial_i},\\ 
Y_{ij} &= \overline{u}_i\partial_j-\overline{u}_j\partial_i, & \overline{Y}_{ij}=u_i\overline{\partial_j}-u_j\overline{\partial_i},\nonumber \\
X_{0j} &= u_0\partial_j-\overline{u}_j\partial_0, & \overline{X_{0j}} = u_0\overline{\partial_j}-u_j\partial_0.\nonumber
\end{align}
\begin{align}
c_{ij}(t) &= \frac{1}{N(\lambda_i(t)-\lambda_j(t))^2}, \\
\tilde{c}_{ij}(t) &=  \frac{1}{N(\lambda_i(t)+\lambda_j(t))^2},\nonumber \\
c_{0j}(t) &= \frac{1}{N\lambda_j^2(t)}. \nonumber
\end{align}
For a fixed $\mathbf{q}\in \mathbf{C}^N$, we let 
\[z_k= \langle \mathbf{q}, v_k\rangle,\quad  k=1,\ldots, (N-1)/2.\] Consider the moments
\begin{gather}
{\mathrm{P}^{(as)}}^{j_0,j_1,\ldots, j_m}_{0 i_1, \ldots, i_m}(z_0,z_1,\ldots,z_{(N-1)/2})=z_0^{2j_0}\prod_{l=1}^m z_{l_i}^{j_i}\bar{z}^{j_i}_{l_i}, \label{eqn: polymoment}\\
{ \mathrm{Q}_{\vec{\lambda}} }^{j_0,j_1,\ldots, j_m}_{0 i_1, \ldots, i_m}(t)=\mathbf{E}^{H_0}({\mathrm{P}^{(as)}}^{j_0,j_1,\ldots, j_m}_{0 i_1, \ldots, i_m}\mid \vec{\lambda})\cdot a(2j_0)^{-1}\prod_{l=1}^m(2^{j_l}j_l!)^{-1}. \label{eqn: asQ}
\end{gather}

The action of the terms $X_{ij}\overline{X_{ij}}$ and $\overline{X_{ij}}X_{ij}$ with $i\neq 0$ on the moments is identical because of the invariance under the permutation $z_i \rightarrow \overline{z}_i$, $i\neq 0$ in the polynomials \eqref{eqn: polymoment}. (This was already used implicitly in \eqref{eqn: dQdt}). Moreover, a calculation shows that $X_{ij}\overline{X_{ij}}$ and $Y_{ij}\overline{Y_{ij}}$ act on the ``anti-symmetric'' moments in identical fashion to $V_{kl}\overline{V}_{kl}$ in the Hermitian case \eqref{eqn: generator1}, \eqref{eqn: generator2}. For $X_{0j}\overline{X_{0j}}$, we have
\begin{align}
X_{0j}\overline{X_{0j}}{ \mathrm{Q}_{\vec{\lambda}} }^{j_1,\ldots, j_m}_{i_1, \ldots, i_m}(t) &= j_k { \mathrm{Q}_{\vec{\lambda}} }^{j_1-1,\ldots,j_k+1,\ldots, j_m}_{i_1, \ldots, i_k, \ldots, i_m}(t) - j_k{ Q_{\vec{\lambda}} }^{j_1,\ldots, j_m}_{i_1, \ldots, i_m}(t), \quad 0\notin \{i_1,\ldots,i_m\}\\
X_{0j}\overline{X_{0j}}{ \mathrm{Q}_{\vec{\lambda}} }^{j_1,\ldots, j_m}_{i_1, \ldots, i_m}(t) &= 2j_0 { \mathrm{Q}_{\vec{\lambda}} }^{j_1,\ldots, j_m}_{i_1, \ldots, i_m}(t) - 2j_0{ \mathrm{Q}_{\vec{\lambda}} }^{j_0-1,\ldots,1,\ldots, j_m}_{0, \ldots, i_k, \ldots, i_m}(t), \quad k\notin \{i_1,\ldots,i_m\}\\
X_{0j}\overline{X_{0j}}{ \mathrm{Q}_{\vec{\lambda}} }^{j_1,\ldots, j_m}_{i_1, \ldots, i_m}(t) &= 2j_0(j_k+1) { \mathrm{Q}_{\vec{\lambda}} }^{j_0-1,\ldots,j_k+1, \ldots, j_m}_{0,i_1, \ldots, i_k, \ldots, i_m}(t)\\
&\quad + (2j_0+1)j_k {\mathrm{Q}_{\vec{\lambda}} }^{j_0+1,\ldots,j_k-1, \ldots, j_m}_{0,i_1, \ldots, i_k, \ldots, i_m}(t) \nonumber \\
&\quad -((2j_0+1)j_k+2j_0(j_k+1)){ \mathrm{Q}_{\vec{\lambda}} }^{j_1,\ldots, j_m}_{i_1, \ldots, i_m}(t). \nonumber
\end{align}

From this we obtain the evolution for the anti-symmetric eigenvector moment flow. We let
\[f^{(as)}_{\vec{\lambda}}(t,\mathbf{\eta})= { \mathrm{Q}_{\vec{\lambda}} }^{j_0,j_1,\ldots, j_m}_{0,i_1, \ldots, i_m}(t)\]
with $\mathbf{\eta}$ defined by the correspondence explained earlier, and $\mathrm{Q}_{\vec{\lambda}}$ is now defined in terms of the anti-symmetric flow as in \eqref{eqn: asQ}. The evolution is given by:
\begin{align}
\partial_t f^{(as)}_{\vec{\lambda}}(t,\mathbf{\eta}) &=\sum_{\substack{j\neq i\\ i,j\ge 1 }} (c_{ij}(t)+\tilde{c}_{ij}(t))\eta_i(1+\eta_j)\left(f(\mathbf{\eta}^{i,j})-f(\mathbf{\eta})\right)\\
&\quad+ \sum_{j\neq 0} c_{0j} 2\eta_0(1+\eta_j)\left(f(\mathbf{\eta}^{0,j})-f(\mathbf{\eta})\right) \nonumber \\
&\quad+ \sum_{i\neq 0} c_{i0} \eta_i(2\eta_0+1)\left(f(\mathbf{\eta}^{i,0})-f(\mathbf{\eta})\right). \nonumber
\end{align}
The maximum principle proof given in \cite[Section 4]{bourgadeyau} to show that $f^{(h)}(t,\mathbf{\eta})$ converges to 1 uniformly in $\mathbf{\eta}$ for fixed particle number $m=\sum \eta_i$, can be reproduced almost exactly for $f^{(as)}_{\vec{\lambda}}(t,\mathbf{\eta})$. We merely need to note that the isotropic local semicircle law proved in \cite[Theorem 2.12]{bloemendaletal}, which is a central ingredient in that proof, applies to general Hermitian matrices, with no non-degeneracy condition on the real part. It thus applies to the anti-symmetric Hermitian matrix $M$.

\end{document}